\newtheorem{theorem}{Theorem}[section]
\newtheorem{lemma}[theorem]{Lemma}
\newtheorem{remark}[theorem]{Remark}
\newtheorem{corollary}[theorem]{Corollary}
\newtheorem{proposition}[theorem]{Proposition}
\newtheorem{example}[theorem]{Example}
\newenvironment{proof}{\trivlist\item[]\rm{\textbf{Proof.}\ }}{\endtrivlist}
      \def\@setcopyright{}
      \def\serieslogo@{}
 \newcommand{\Ima}{\mathrm{Im}}
\newcommand{\Lie}{\ensuremath{\mathsf{Lie}}}
\newcommand{\Leib}{\ensuremath{\mathsf{Leib}}}
\newcommand{\q}{\mathfrak{q}}
\newcommand{\m}{\mathfrak{m}}
\newcommand{\g}{\mathfrak{g}}
\newcommand{\s}{\mathfrak{s}}
\newcommand{\f}{\mathfrak{f}}
\newcommand{\h}{\mathfrak{h}}
\newcommand{\p}{\mathfrak{p}}
\newcommand{\I}{\mathfrak{I}}
\newcommand{\R}{\mathfrak{r}}
\newcommand{\U}{\mathfrak{u}}
\author{Hesam Safa and Guy R. Biyogmam
    \\
}
\title{On the Schur \Lie-multiplier and \Lie-covers  of Leibniz $n$-algebras}
\begin{document}
\maketitle
%******************************************************************************************************************
%\begin{abstract}
%\noindent\textbf{Abstract.}

%\section{Introduction}

\noindent\textbf{Abstract.}
In this article, we study the notion of central extension of Leibniz $n$-algebras relative to $n$-Lie algebras to study  properties  of Schur \Lie-multiplier and \Lie-covers  on Leibniz $n$-algebras.   We provide a characterization of \Lie-perfect Leibniz $n$-algebras by means of universal \Lie-central extensions. It is also provided some inequalities on the dimension of the Schur \Lie-multiplier of Leibniz $n$-algebras.   Analogue to
Wiegold \cite{w} and
Green \cite{grn} results on  groups or   Moneyhun \cite{m}  result on Lie algebras, we provide  upper bounds for the dimension of the \Lie-commutator of a Leibniz $n$-algebra with  finite dimensional \Lie-central factor, and also for  the dimension of the  Schur \Lie-multiplier  of a finite dimensional Leibniz $n$-algebra.\\
\\
\textbf{2010 MSC:} 17A32,  18B99.  \\
\textbf{Key words:} Leibniz $n$-algebra, \Lie-cover,  Schur \Lie-multiplier.
%\end{abstract}

%******************************************************************************************************************

\section{Introduction}

The concept of Schur multiplier was introduced by Schur  \cite{s} in 1904 in his study of projective representations of a group. Since then, it  has appeared in many studies related to mathematical concepts such as efficient presentations, homology and projective representations of various algebraic structures.   The Schur multiplier is very useful as a tool to classify $p$-groups and nilpotent Lie algebras, thanks to the results obtained by Green \cite{grn} and Moneyhun \cite{m}, respectively.  In fact, Green  showed that   for every finite $p$-group $G$ of order
$p^k$, there is a non-negative integer $t(G)$ such that $|M(G)|=p^{\frac{1}{2}k(k-1)-t(G)},$ and  Moneyhun showed   that if $L$ is a Lie algebra of dimension $k$, then $\dim\mathcal{M}(L)\leq \frac{1}{2}k(k-1)$.
In this paper, we aim to provide a similar upper bound for Leibniz $n$-algebras, which is a generalization of both Leibniz algebras \cite{l3} and $n$-Lie algebras  (also known as Filippov algebras) \cite{Fil}. This upper bound can be useful in characterizing  \Lie-nilpotent Leibniz algebras, and more generally,  \Lie-nilpotent Leibniz $n$-algebras.

Some research interest has focussed on investigating properties on Leibniz $n$-algebras analogue to results obtained on Leibniz algebras and $n$-Lie algebras. It turns out that several of these properties fail to extend to Leibniz $n$-algebras.  The present paper provides a few counter-examples.

The concept of isoclinism was initiated in 1939 by P. Hall in his  work on the classification of $p$-groups using an equivalence relation weaker than the notion of isomorphism \cite{p-h}. Studies of this concept on various algebraic structures can be found in  \cite{Bio,  e-m-s, He,  PMKh,sal3}.

Following a philosophy that comes from  the categorical theory of central extensions relative to a chosen Birkhoff subcategory of a semi-abelian category \cite{c-v},  and applying on  the semi-abelian category of Leibniz
algebras with respect to its Birkhoff subcategory of Lie algebras, the concepts of Schur multiplier, isoclinism, and cover have been recently considered on Leibniz algebra in the relative context, i.e. with respect to  the Liezation functor  $(-)_{\Lie}: {\sf Leib} \to {\sf Lie}$ which assigns the Lie algebra $\q_{\Lie} = \dfrac{\q}{\langle [x,x]: x \in \q\rangle}$ to a given Leibniz algebra $\q.$ This yielded the notions of Schur \Lie-multiplier, \Lie-isoclinism, \Lie-cover  \cite{b-c1, b-c2, b-c3, c-i} as well as other notions such as  \Lie-stem Leibniz algebras, \Lie-perfect Leibniz algebras  and \Lie-abelian Leibniz algebras.  Note that these new notions provide a solid framework in which one can identify  properties on $n$-Lie algebras and  Leibniz algebras that cannot be extended to Leibniz $n$-algebras. For instance,
the concepts of isomorphism  and isoclinism coincide for finite dimensional stem $n$-Lie algebras (see \cite{e-m-s,m,sal2}).
However,  this does not hold for finite dimensional $\Lie$-stem Leibniz $n$-algebras ($n\geq 2$), as  discussed in \cite{r-c}. Also, the  Schur \Lie-multiplier, and more generally, the $c$-nilpotent Schur \Lie-multiplier  of a Leibniz algebra are both Lie algebras. But, this is not true for Leibniz $n$-algebras with $n\geq 3,$ as discussed in Remark \ref{rem17}. Moreover, it is shown in \cite{d-s} that a $k$-dimensional $n$-Lie algebra $L$ is abelian if and only if the dimension of its Schur multiplier $\dim\mathcal{M}(L)={k\choose n}$. This does not hold for Leibniz $n$-algebras
with $n\geq3$, as discussed in Example \ref{ex23}.

%This concept has been  studied in various algebraic structures, including groups \cite{Bio,  He, MSCh},  Lie algebras \cite{PMKh, Sal3} and Filippov algebras \cite{DFS,e-m-s}.

 %This philosophy comes from the categorical theory of central extensions relative to a chosen Birkoff subcategory of a semi-abelian category. We refer to  \cite{BC, CKh, CVDL1} and references given therein for a detailed explanation.
In this article, we continue the study of \Lie-central extensions on  Leibniz $n$-algebras, initiated in \cite{s-b,s-b2}.  Concretely, we study  the notions of Schur \Lie-multiplier and \Lie-cover  on Leibniz $n$-algebras, and their interplay with \Lie-isoclinism. In doing so, we organize the paper as follows: in section \ref{Preliminaries}, we recall the \Lie-notions from \cite{s-b},  define the concept of Schur \Lie-multiplier on Leibniz $n$-algebras and present some preliminaries including useful results from \cite{s-b2}.  In section \ref{Lie-cover}, we study several properties of  \Lie-covers of Leibniz $n$-algebras. In particular,
%we prove that  every \Lie-stem extension of a finite dimensional Leibniz  $n$-algebra $\q$ is a homomorphic image of a \Lie-cover of $\q.$
we  show that every \Lie-perfect Leibniz $n$-algebra admits at least one \Lie-cover. Also, using a homological method analogue to Loday's discussion in \cite{l2} on a necessary and sufficient condition for a pair of groups to be perfect, we  provide a characterization of \Lie-perfect Leibniz $n$-algebras by means of universal \Lie-central extensions. The last section is devoted to some inequalities on the dimension of the Schur \Lie-multiplier of Leibniz $n$-algebras. In particular, we establish  analogues of Wiegold \cite{w} (resp.  Moneyhun \cite{m})  result on an upper bound for the  order (resp. dimension) of the commutator subgroup (resp. derived algebra) of a group with finite  central factor (resp. Lie algebra with finite dimensional central factor), and also Moneyhun \cite{m} result on  an upper bound for the dimension of the Schur multiplier of a
 finite dimensional Lie algebra. More precisely, we show that for a Leibniz $n$-algebra $\q$ with $k$-dimensional \Lie-central factor  $\q/Z_\Lie(\q),$  \Lie-commutator $\q_\Lie^n$,
\[\dim\q_\Lie^n\leq \sum_{i=1}^{n} {{n-1}\choose{i-1}}{k\choose i},\]
and also for a $k$-dimensional  Leibniz $n$-algebra $\q$,   Schur \Lie-multiplier  $\mathcal{M}_\Lie(\q),$
\[\dim\mathcal{M}_\Lie(\q)\leq\sum_{i=1}^{n} {{n-1}\choose{i-1}}{k\choose i}.\]

%%%%%%%%%%%%%%%%%%%%%%%%%%%%%%%%%%%%%%%%%%%%%%%%%%%%%%%%%%%%%%%%%%%%%%%%%%%%%%%%%%%%%%%%%%%%%%%%%%%%%%%%%%%%%%%%%%
%%%%%%%%%%%%%%%%%%%%%%%%%%%%%%%%%%%%%%%%%%%%%%%%%%%%%%%%%%%%%%%%%%%%%%%%%%%%%%%%%%%%%%%%%%%%%%%%%%%%%%%%%%%%%%%%%%

\section{Preliminaries}\label{Preliminaries}
Throughout this paper, $n\geq 2$ is a fixed  integer and all Leibniz $n$-algebras are
considered over a fixed field $\mathbb{K}$ of characteristic zero.

 Recall that a Leibniz algebra \cite{l1, l3} is a vector space $\q$  equipped with a bilinear map $[-,-] : \q \times \q \to \q$, usually called the Leibniz bracket of ${\q}$,  satisfying the Leibniz identity:
\[
 [x,[y,z]]= [[x,y],z]-[[x,z],y], \quad x, y, z \in \q.
\]
A Leibniz $n$-algebra \cite{c-l-p} is a vector space $\q$ together with the following
$n$-linear map $[-,\ldots,-]:\q\times\cdots\times\q\longrightarrow  \q$
 such that
\[[[x_1,\ldots,x_n],y_2,\ldots,y_n]=\sum_{i=1}^{n} [x_1,\ldots,x_{i-1},[x_i,y_2,\ldots,y_n],x_{i+1},\ldots,x_n],\]
for all $x_i,y_j\in \q$, $1\leq i\leq n$ and $2\leq j\leq n$.
A subspace $\h$ of a Leibniz $n$-algebra $\q$ is called a subalgebra, if $[x_1,\ldots,x_n]\in\h$, for any $x_i\in\h$.
Also, a subalgebra $\I$ of $\q$ is said to be an ideal, if the brackets $[\I,\q,\ldots,\q]$, $[\q,\I,\q,\ldots,\q]$,
$\ldots$ ,  $[\q,\ldots,\q,\I]$ are all contained in $\I$.

Let $\q$ be a Leibniz $n$-algebra. Following \cite{s-b}, we define the bracket
\[[-,\ldots,-]_{\Lie}:\q\times\cdots\times\q\longrightarrow \q\]
\[\ \ [x_1, \ldots,x_n]_{\Lie}=\sum_{1\leq i_j\leq n}[x_{i_1},\ldots,x_{i_n}],\]
which is including $n!$ brackets. More precisely,
\begin{align}\label{eq1}
 [x_1, \ldots,x_n]_{\Lie}&=[(x_1 +\cdots+x_n),\ldots,(x_1 +\cdots+x_n)]\nonumber\\
 &-\sum_{1\leq i_j\leq n}[x_{i_1},\ldots,x_{i_j},\ldots,x_{i_j},\ldots,x_{i_n}].
\end{align}
Also, the $\Lie$-center and the $\Lie$-commutator of a Leibniz $n$-algebra $\q$ are defined as
$Z_{\Lie}(\q)=\{x\in\q| \ [x,_{n-1}\q]_\Lie=0\}$ and
$\q_{\Lie}^n=\langle [x_1,\ldots,x_n]_{\Lie} |\ x_i\in \q\rangle,$
where $[x,_{n-1}\q]_\Lie=[x,\q,\ldots,\q]_{\Lie}$. Clearly, these  are  ideals of $\q$ (see \cite[Remark 2.1]{s-b}).

Let $0\rightarrow\R\rightarrow\f\rightarrow\q\rightarrow 0$  be a free presentation of a Leibniz $n$-algebra $\q$. The  Schur \Lie-multiplier
of $\q$ is defined as
\[\mathcal{M}_\Lie(\q)=\dfrac{\R\cap\f_\Lie^n}{[\R,_{n-1}\f]_\Lie}.\]
Clearly, the Schur \Lie-multiplier is  independent of
the chosen free presentation of  $\q$. Also if $n=2$, then this definition coincides with the notion of the Schur
\Lie-multiplier of a Leibniz algebra given in \cite{c-i}.
Note that two other notions of the Schur multiplier, namely the $c$-nilpotent Schur \Lie-multiplier of a Leibniz algebra and
 the Schur multiplier  of a pair of Leibniz algebras
are already discussed in \cite{b-c3, b-s}, respectively.

\begin{remark}\label{rem17} \normalfont
In the case where the $n$-linear map $[-, \ldots , -]$ is anti-symmetric in each
pair of variables, i.e. $[x_1,  \ldots , x_i,\ldots, x_j, \ldots, x_n] = - [x_1, \ldots , x_j,\ldots, x_i, \ldots, x_n],$
or equivalently $[x_1, \ldots , x_i, \ldots , x_j, \ldots , x_n] = 0$ when $x_i=x_j$, the Leibniz $n$-algebra becomes an $n$-Lie  algebra.
In this case, the equality $(\ref{eq1})$ implies that $\q_{\Lie}^n=0$ and $Z_{\Lie}(\q)=\q$.
A Leibniz $n$-algebra $\q$ is called \Lie-abelian, if $\q_\Lie^n=0$ or equivalently $Z_\Lie(\q)=\q$.
Thus every $n$-Lie  algebra  is a \Lie-abelian Leibniz $n$-algebra.

Conversely, if  $\q$ is a \Lie-abelian Leibniz $2$-algebra
over a field $\mathbb{K}\ni \frac{1}{2}$, then $[x,x]_\Lie=0$ and so $[x,x]=0$. Hence $\q$
 is actually a Lie algebra. Therefore,  the Schur
\Lie-multiplier   \cite{c-i} and also the $c$-nilpotent Schur \Lie-multiplier  \cite{b-c3} of a Leibniz algebra
 are both Lie algebras, since they are \Lie-abelian.
  But the Schur \Lie-multiplier of a  Leibniz $n$-algebra $(n\geq3)$ is a \Lie-abelian Leibniz $n$-algebra which
  is not an $n$-Lie algebra, in general.
  In fact, for a Leibniz 2-algebra $\q$, we have $\q_\Lie^2=\langle [x,x]~ | ~x \in \q\rangle$, while
if $\q$ is a Leibniz $n$-algebra with $n\geq3$, then $\q_\Lie^n\subseteq {\sf _nLeib}$, where
 $${\sf _nLeib(\q)}=\left\langle [x_1,\ldots,x_i,\ldots,x_j,\ldots, x_n]~|~\exists i,j : x_i=x_j~\mbox{with}~ x_1,\ldots, x_n\in\q \right\rangle.$$
\end{remark}

 In the following example, we give a \Lie-abelian Leibniz $3$-algebra which is not an $n$-Lie algebra.

\begin{example}\label{ex1} \normalfont
Let  $\q=span\{x,y\}$ be the $2$-dimensional complex Leibniz $3$-algebra with non-zero multiplications $[x,y,y]=-2x$ and $[y,y,x]=[y,x,y]=x$.  It is easy to see that $Z_{\Lie}(\q)=\q$ and $\q_{\Lie}^3=0$.
\end{example}

Recall from \cite{s-b2} that an extension of  Leibniz $n$-algebras
$0\rightarrow\m\stackrel{\subseteq}{\rightarrow}\g\stackrel{\sigma}{\rightarrow}\q\rightarrow 0$
is said to be \Lie-central, if  $\m\subseteq Z_\Lie (\g)$ or equivalently $[\m,_{n-1}\g]_\Lie=0$. Moreover, a \Lie-central extension
   is called a \Lie-stem extension, whenever $\m\subseteq Z_\Lie (\g)\cap\g_\Lie^n$. In addition,
 a \Lie-stem extension  is said to be a \Lie-stem cover of $\q$, if
$\m\cong\mathcal{M}_\Lie(\q)$.
In this case, $\g$ is called a \Lie-cover of $\q$.

In \cite{s-b2}, it is discussed the following results on the Schur \Lie-multiplier of Leibniz $n$-algebras,
using their \Lie-central extensions.

\begin{lemma}\label{lem1}
Let $0\rightarrow\R\rightarrow\f\stackrel{\pi}{\rightarrow}\q\rightarrow 0$ be a free presentation of a Leibniz $n$-algebra
$\q$, and  $0\rightarrow\m\rightarrow\g\stackrel{\theta}{\rightarrow}\p\rightarrow 0$ be a \Lie-central extension of another Leibniz
$n$-algebra $\p$. Then for each homomorphism $\alpha:\q\to\p$, there exists a homomorphism
$\beta:\dfrac{\f}{[\R,_{n-1}\f]_\Lie}\to\g$ such that the following diagram is commutative:
\[\xymatrix{0\ar[r]&\dfrac{\R}{[\R,_{n-1}\f]_\Lie}\ar[r]\ar[d]_{\beta|}&\dfrac{\f}{[\R,_{n-1}\f]_\Lie}\ar[r]^{\ \ \ \ \ \bar{\pi}}\ar[d]_{\beta}& \q\ar[r]\ar[d]_{\alpha}&0\\
 0\ar[r]&\m\ar[r]& \g\ar[r]^{\theta}& \p\ar[r]&0,}\]
 where $\bar{\pi}$ is the natural epimorphism induced by $\pi$.
\end{lemma}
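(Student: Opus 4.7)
The plan is to use the freeness of $\f$ to lift $\alpha\circ\pi$ to a homomorphism into $\g$, and then to show that this lift factors through the quotient $\f/[\R,_{n-1}\f]_\Lie$ thanks to the \Lie-centrality of the bottom extension.

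First, since $\f$ is free in the category of Leibniz $n$-algebras and $\theta:\g\twoheadrightarrow\p$ is surjective, I would lift $\alpha\circ\pi:\f\to\p$ to a homomorphism $\gamma:\f\to\g$ satisfying $\theta\circ\gamma=\alpha\circ\pi$. Since $\pi(\R)=0$, this identity forces $\theta(\gamma(\R))=0$, so $\gamma(\R)\subseteq\ker\theta=\m$.

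The key step is to prove that $\gamma$ vanishes on $[\R,_{n-1}\f]_\Lie$. As $\gamma$ preserves the $n$-linear bracket of $\f$, formula $(\ref{eq1})$ shows that it also preserves the $\Lie$-bracket; combining this with $\gamma(\R)\subseteq\m$ yields
\[
\gamma\bigl([\R,_{n-1}\f]_\Lie\bigr)\subseteq [\gamma(\R),_{n-1}\gamma(\f)]_\Lie\subseteq [\m,_{n-1}\g]_\Lie=0,
\]
where the last equality is exactly the \Lie-centrality hypothesis $\m\subseteq Z_\Lie(\g)$. By the universal property of the quotient, $\gamma$ therefore descends to a well-defined homomorphism $\beta:\f/[\R,_{n-1}\f]_\Lie\to\g$.

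Finally, commutativity of the right square follows by post-composing the identity $\theta\circ\gamma=\alpha\circ\pi$ with the projection $q:\f\twoheadrightarrow\f/[\R,_{n-1}\f]_\Lie$ and using the defining relation $\bar\pi\circ q=\pi$ together with the surjectivity of $q$. Commutativity of the left square is the restriction statement $\beta|\colon\R/[\R,_{n-1}\f]_\Lie\to\m$, which is immediate from $\gamma(\R)\subseteq\m$. The argument is largely formal; the only non-trivial content sits in the descent step, where one must unpack the symmetric sum in $(\ref{eq1})$ and observe that each resulting summand has at least one entry drawn from $\gamma(\R)\subseteq\m$, so it lies in $[\m,_{n-1}\g]_\Lie=0$.
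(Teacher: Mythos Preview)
Your argument is correct and is exactly the standard one: lift $\alpha\circ\pi$ through the surjection $\theta$ using the freeness of $\f$, observe that the lift sends $\R$ into $\m$, and then use $[\m,_{n-1}\g]_\Lie=0$ to kill $[\R,_{n-1}\f]_\Lie$ and descend to the quotient. Note, however, that the paper does not supply its own proof of this lemma; it is stated in the preliminaries as a result imported from \cite{s-b2}, so there is no in-text argument to compare against. Your proof is precisely the expected one for a statement of this type.
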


\begin{theorem}\label{th2}
Let $\q$ be a Leibniz $n$-algebra whose Schur \Lie-multiplier is finite dimensional and $\q\cong\f/\R$, where $\f$ is a free Leibniz $n$-algebra.
Then the extension $0\rightarrow\m\rightarrow\g\rightarrow\q\rightarrow 0$ is a \Lie-stem cover if and only if there exists an
ideal $\s$ of $\f$ such that
\begin{itemize}
\item[$(i)$] $\g\cong\f/\s$ and $\m\cong\R/\s$.
\item[$(ii)$] $\dfrac{\R}{[\R,_{n-1}\f]_\Lie}\cong\mathcal{M}_\Lie(\q)\oplus\dfrac{\s}{[\R,_{n-1}\f]_\Lie}$.
\end{itemize}
\end{theorem}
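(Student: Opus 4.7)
The plan is to prove the two implications separately, with Lemma \ref{lem1} providing the key lifting tool for the forward direction.

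For the forward direction, given a \Lie-stem cover $0\to\m\to\g\stackrel{\theta}{\to}\q\to 0$ together with the free presentation, I apply Lemma \ref{lem1} with $\alpha=\mathrm{id}_\q$ and $\p=\q$ to obtain a homomorphism $\beta:\f/[\R,_{n-1}\f]_\Lie\to\g$ lifting $\bar{\pi}$. The first task is to show that $\beta$ is surjective: since $\theta\circ\beta=\bar{\pi}$ is onto, $\beta(\f/[\R,_{n-1}\f]_\Lie)+\m=\g$; using $\m\subseteq Z_\Lie(\g)$, any \Lie-bracket of $n$ elements of $\g$, written as a sum of $\beta$-images and elements of $\m$, expands by multilinearity so that every term containing a factor in $\m$ vanishes by centrality, whence $\g_\Lie^n\subseteq\beta(\f/[\R,_{n-1}\f]_\Lie)$; combined with $\m\subseteq\g_\Lie^n$, this yields surjectivity. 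Defining $\s$ as the ideal of $\f$ with $\s/[\R,_{n-1}\f]_\Lie=\ker\beta$, the fact that $\beta$ lifts the identity forces $\s\subseteq\R$, so $\g\cong\f/\s$ and $\m\cong\R/\s$, proving (i). For (ii) I consider the composition
\[
  \mathcal{M}_\Lie(\q)=\frac{\R\cap\f_\Lie^n}{[\R,_{n-1}\f]_\Lie}\hookrightarrow\frac{\R}{[\R,_{n-1}\f]_\Lie}\stackrel{\beta}{\longrightarrow}\m,
\]
which is surjective because each $m\in\m\subseteq\g_\Lie^n$ lifts to some element of $\f_\Lie^n$ whose image projects to zero in $\q$, and therefore belongs to $\R\cap\f_\Lie^n$. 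Finite-dimensionality of $\mathcal{M}_\Lie(\q)$ (and hence of $\m$) then upgrades surjectivity to an isomorphism, which translates directly into the direct sum decomposition asserted in (ii).

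For the backward direction, starting from (i)--(ii), I read the splitting in $\f$ as $\R=(\R\cap\f_\Lie^n)+\s$ and $(\R\cap\f_\Lie^n)\cap\s=[\R,_{n-1}\f]_\Lie$. The former gives $\R\subseteq\f_\Lie^n+\s$, so $\m\cong\R/\s\subseteq\g_\Lie^n$ in $\g=\f/\s$. The inclusion $[\R,_{n-1}\f]_\Lie\subseteq\s$, implicit in the decomposition (ii) being well-defined, yields $\m\subseteq Z_\Lie(\g)$. Finally, quotienting both summands in (ii) by $\bar{\s}$ gives $\R/\s\cong\mathcal{M}_\Lie(\q)$, i.e., $\m\cong\mathcal{M}_\Lie(\q)$, so the extension is a \Lie-stem cover.

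The main obstacle I expect is the surjectivity of $\beta$ in the forward direction: it is not purely formal and requires carefully expanding an $n$-ary \Lie-bracket of elements of $\g$ using the coset description $\g=\beta(\bar{\f})+\m$ and exploiting the centrality condition to kill all mixed terms. A secondary subtlety is ensuring that the splitting in (ii) is realized by the \emph{specific} embedding of $\mathcal{M}_\Lie(\q)=(\R\cap\f_\Lie^n)/[\R,_{n-1}\f]_\Lie$ in $\bar{\R}$, rather than as an abstract isomorphism; this is precisely what the composition argument, together with the finiteness assumption, delivers.
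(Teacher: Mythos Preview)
Your proposal is correct. Note that the paper does not actually prove Theorem~\ref{th2}; it is quoted from \cite{s-b2} in the Preliminaries. That said, your approach matches the standard one and is fully consistent with the techniques the paper uses in closely related arguments: your surjectivity step for $\beta$ is exactly Lemma~\ref{lem6}, and your argument that $\beta$ restricts to a surjection $\mathcal{M}_\Lie(\q)\twoheadrightarrow\m$ (and then, by equality of finite dimensions, to an isomorphism yielding the internal direct sum) is the same mechanism the paper employs in the proof of Proposition~\ref{prop5}. The backward direction, reading (ii) as an internal decomposition $\R=(\R\cap\f_\Lie^n)+\s$ with $(\R\cap\f_\Lie^n)\cap\s=[\R,_{n-1}\f]_\Lie$, is also the intended interpretation, as confirmed by the way Theorem~\ref{th2} is invoked at the end of the proof of Proposition~\ref{prop5}.
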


\begin{corollary}\label{coro3}
Any finite dimensional Leibniz $n$-algebra has at least one \Lie-cover.
\end{corollary}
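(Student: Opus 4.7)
The plan is to apply Theorem \ref{th2} directly. Before doing so, I would verify the finite-dimensionality hypothesis: for a finite dimensional Leibniz $n$-algebra $\q$, one can choose a free presentation $0 \to \R \to \f \to \q \to 0$ with $\f$ finitely generated, and then $\mathcal{M}_\Lie(\q) = (\R \cap \f_\Lie^n)/[\R,_{n-1}\f]_\Lie$ is finite dimensional because the image of $\R \cap \f_\Lie^n$ in $\f_\Lie^n/[\f,_{n-1}\f]_\Lie$ is controlled by finitely many $n$-fold \Lie-brackets of a chosen generating set of $\f$ (this is the same count that underlies the explicit bound announced in the abstract).

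With Theorem \ref{th2} applicable, the problem reduces to producing an ideal $\s$ of $\f$ with $[\R,_{n-1}\f]_\Lie \subseteq \s \subseteq \R$ such that $\s/[\R,_{n-1}\f]_\Lie$ is a vector space complement of $\mathcal{M}_\Lie(\q)$ inside $\bar{\R} := \R/[\R,_{n-1}\f]_\Lie$. I would work in the quotient $\bar{\f} := \f/[\R,_{n-1}\f]_\Lie$, in which $\bar{\R}$ is a \Lie-central ideal and $\bar{M} := \mathcal{M}_\Lie(\q)$ sits as a finite dimensional ideal, being the intersection of the two ideals $\bar{\R}$ and $\bar{\f}_\Lie^n$. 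The construction then selects a complementary subspace $V$ of $\bar{M}$ in $\bar{\R}$ and takes $\s$ to be the preimage of $V$ in $\f$.

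The main obstacle is ensuring that $\s$ can be chosen as an \emph{ideal} of $\f$, equivalently that $V$ is an ideal of $\bar{\f}$. Although $\bar{\R} \subseteq Z_\Lie(\bar{\f})$ forces all symmetrized $n$-fold brackets of elements of $\bar{\R}$ against $\bar{\f}$ to vanish, individual Leibniz brackets of such elements need not fall into an arbitrary vector space complement of $\bar{M}$. The technical step is to split the ideal extension $0 \to \bar{M} \to \bar{\R} \to \bar{\R}/\bar{M} \to 0$ of $\bar{\f}$-ideals; since $\bar{M}$ is a finite dimensional ideal of $\bar{\f}$ over a characteristic-zero field, such a splitting exists and yields the required $\s$. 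With $\s$ so constructed, conditions (i) and (ii) of Theorem \ref{th2} follow immediately from the direct sum decomposition $\bar{\R} = \bar{M} \oplus V$, and $\g := \f/\s$ is a \Lie-cover of $\q$.
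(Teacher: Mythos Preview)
Your overall strategy---verify that $\mathcal{M}_\Lie(\q)$ is finite dimensional and then construct the complement $\s$ required by Theorem~\ref{th2}---is exactly the intended route. The paper quotes Corollary~\ref{coro3} from \cite{s-b2} without proof, and the identical complement-taking step is carried out verbatim in its proof of Proposition~\ref{prop5}. (For the finite-dimensionality of $\mathcal{M}_\Lie(\q)$ you could simply invoke Corollary~\ref{coro17}(i) rather than sketch a separate counting argument.)

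The difficulty you flag is genuine and, in fact, is a point the paper itself passes over silently: since $\bar\R=\R/[\R,_{n-1}\f]_\Lie$ is only \Lie-central in $\bar\f$, the ordinary Leibniz brackets $[\bar r,\bar f_1,\ldots,\bar f_{n-1}]$ need not vanish, so an arbitrary linear complement $V$ of $\bar M$ in $\bar\R$ is not automatically an ideal of $\bar\f$. However, your proposed resolution does not close this gap. The assertion that the short exact sequence $0\to\bar M\to\bar\R\to\bar\R/\bar M\to 0$ of $\bar\f$-ideals splits ``since $\bar M$ is a finite dimensional ideal of $\bar\f$ over a characteristic-zero field'' is not a theorem: finite dimension and characteristic zero alone do not force extensions of modules or ideals to split (consider $0\to\mathbb{K}\to\mathbb{K}[t]/(t^{2})\to\mathbb{K}\to 0$ as $\mathbb{K}[t]$-modules). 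An actual argument is needed---for instance, showing that the induced action of $\bar\f$ on $\bar\R$ already lands in a specific subspace, or exhibiting an explicit ideal complement. As written, you have correctly located a subtlety that the paper leaves implicit, but you have replaced it with an unjustified claim rather than a proof.
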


\begin{corollary}\label{coro4}
All \Lie-stem covers of a Leibniz $n$-algebra   with finite dimensional Schur \Lie-multiplier are \Lie-isoclinic.
\end{corollary}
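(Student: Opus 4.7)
The plan is to exhibit both \Lie-stem covers as quotients of a single universal Leibniz $n$-algebra and then read the \Lie-isoclinism data off from that common source. Fix a free presentation $0 \to \R \to \f \to \q \to 0$ and form the ``envelope'' $\overline{\f} := \f/[\R,_{n-1}\f]_\Lie$, with $\overline{\R} := \R/[\R,_{n-1}\f]_\Lie$; note $\overline{\R} \subseteq Z_\Lie(\overline{\f})$ by construction. For each \Lie-stem cover $\g_i$ ($i=1,2$), Theorem \ref{th2} supplies an ideal $\overline{\s_i} := \s_i/[\R,_{n-1}\f]_\Lie$ of $\overline{\f}$ with $\g_i \cong \overline{\f}/\overline{\s_i}$ and $\overline{\R} = \mathcal{M}_\Lie(\q) \oplus \overline{\s_i}$. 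Let $\beta_i: \overline{\f} \to \g_i$ denote the natural surjection; since $\overline{\s_i} \subseteq \overline{\R} \subseteq Z_\Lie(\overline{\f})$, this is a \Lie-central extension.

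The crucial input, from which everything else falls out, is the identity
\[
\overline{\s_i} \cap \overline{\f}_\Lie^n = 0.
\]
Indeed, $\overline{\R} \cap \overline{\f}_\Lie^n = (\R \cap \f_\Lie^n)/[\R,_{n-1}\f]_\Lie = \mathcal{M}_\Lie(\q)$ (using $[\R,_{n-1}\f]_\Lie \subseteq \R \cap \f_\Lie^n$), and $\overline{\s_i}$ is chosen to complement $\mathcal{M}_\Lie(\q)$ inside $\overline{\R}$, so the intersection is zero.

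Two consequences drive the argument. First, $\beta_i$ restricts to an isomorphism $\overline{\f}_\Lie^n \xrightarrow{\sim} (\g_i)_\Lie^n$: injectivity follows from the displayed identity (the kernel is $\overline{\s_i} \cap \overline{\f}_\Lie^n$), and surjectivity from $\beta_i$ being onto together with the fact that the \Lie-bracket is preserved by homomorphisms. Second, $\beta_i^{-1}(Z_\Lie(\g_i)) = Z_\Lie(\overline{\f})$: if $x \in \overline{\f}$ satisfies $[x,_{n-1}\overline{\f}]_\Lie \subseteq \overline{\s_i}$, then because that bracket automatically lies in $\overline{\f}_\Lie^n$, it must in fact be zero; the reverse inclusion is trivial. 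Hence $\g_i/Z_\Lie(\g_i) \cong \overline{\f}/Z_\Lie(\overline{\f})$.

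Composing the identifications for $i=1$ and $i=2$ yields isomorphisms
\[
\xi := \beta_2 \circ \beta_1^{-1} : (\g_1)_\Lie^n \longrightarrow (\g_2)_\Lie^n, \qquad \eta : \g_1/Z_\Lie(\g_1) \longrightarrow \g_2/Z_\Lie(\g_2).
\]
Compatibility of $(\eta,\xi)$ with the \Lie-bracket is automatic, since both brackets descend from the single one on $\overline{\f}$: writing $a_j = \beta_1(y_j)$, one computes $\xi([a_1,\dots,a_n]_\Lie) = \beta_2([y_1,\dots,y_n]_\Lie) = [\beta_2(y_1),\dots,\beta_2(y_n)]_\Lie$, which is precisely the bracket of any representatives of $\eta(a_j + Z_\Lie(\g_1))$. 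This exhibits $\g_1$ and $\g_2$ as \Lie-isoclinic. The one delicate point is the kernel computation for $Z_\Lie$, but once the direct-sum identity $\overline{\s_i} \cap \overline{\f}_\Lie^n = 0$ is in hand, the rest is formal diagram chasing through $\overline{\f}$.
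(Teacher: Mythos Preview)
Your argument is correct and is essentially the approach implicit in the paper: this corollary is quoted from \cite{s-b2} without proof here, but the adjacent Corollary~\ref{coro5} records that every \Lie-cover of $\q$ is \Lie-isoclinic to $\overline{\f}=\f/[\R,_{n-1}\f]_\Lie$, and your proof is precisely the direct verification of that fact (the key identity $\overline{\s_i}\cap\overline{\f}_\Lie^n=0$, coming from Theorem~\ref{th2}(ii), forces the projection $\beta_i$ to match \Lie-commutators and \Lie-central quotients) followed by composition/transitivity.
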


\begin{corollary}\label{coro5}
 Let $\q$ be  a Leibniz $n$-algebra with finite dimensional Schur \Lie-multiplier, and $\g$ be a \Lie-cover of $\q$. Then  $\g\sim\dfrac{\f}{[\R,_{n-1}\f]_\Lie}$, where $\f$ is a free Leibniz $n$-algebra such that $\q\cong\f/\R$.
\end{corollary}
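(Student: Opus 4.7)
The plan is to exploit Theorem \ref{th2} together with the general principle that a Leibniz $n$-algebra is \Lie-isoclinic to any quotient of it by a \Lie-central ideal which meets the \Lie-commutator trivially. Applying Theorem \ref{th2} to the \Lie-stem cover $0\to\m\to\g\to\q\to 0$ produces an ideal $\s$ of $\f$ with $[\R,_{n-1}\f]_\Lie\subseteq\s\subseteq\R$, $\g\cong\f/\s$, and
$$\frac{\R}{[\R,_{n-1}\f]_\Lie}\;\cong\;\mathcal{M}_\Lie(\q)\;\oplus\;\frac{\s}{[\R,_{n-1}\f]_\Lie}.$$
Writing $\mathcal{F}=\f/[\R,_{n-1}\f]_\Lie$ and $\mathcal{S}=\s/[\R,_{n-1}\f]_\Lie$, the third isomorphism theorem gives $\g\cong\mathcal{F}/\mathcal{S}$, so it suffices to prove $\mathcal{F}\sim\mathcal{F}/\mathcal{S}$.

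Next I would verify that $\mathcal{S}$ is a \Lie-central ideal of $\mathcal{F}$ and that $\mathcal{S}\cap\mathcal{F}_\Lie^n=0$. The inclusion $\s\subseteq\R$ gives $[\s,_{n-1}\f]_\Lie\subseteq[\R,_{n-1}\f]_\Lie$, so $\mathcal{S}\subseteq Z_\Lie(\mathcal{F})$. For the trivial intersection, note that $\mathcal{F}_\Lie^n$ is the image of $\f_\Lie^n$ in $\mathcal{F}$, whence
$$\mathcal{F}_\Lie^n\cap\frac{\R}{[\R,_{n-1}\f]_\Lie}=\frac{\f_\Lie^n\cap\R}{[\R,_{n-1}\f]_\Lie}=\mathcal{M}_\Lie(\q),$$
and the direct-sum decomposition above forces $\mathcal{S}\cap\mathcal{M}_\Lie(\q)=0$, hence $\mathcal{S}\cap\mathcal{F}_\Lie^n=0$.

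Finally I would construct the \Lie-isoclinism $(\eta,\xi):\mathcal{F}\sim\mathcal{F}/\mathcal{S}$ directly from the natural projection $\pi:\mathcal{F}\to\mathcal{F}/\mathcal{S}$. The restriction of $\pi$ to $\mathcal{F}_\Lie^n$ is surjective (brackets map to brackets) with kernel $\mathcal{S}\cap\mathcal{F}_\Lie^n=0$, giving an isomorphism $\xi:\mathcal{F}_\Lie^n\to(\mathcal{F}/\mathcal{S})_\Lie^n$. For the central factor, if $x\in\mathcal{F}$ maps into $Z_\Lie(\mathcal{F}/\mathcal{S})$ then $[x,_{n-1}\mathcal{F}]_\Lie\subseteq\mathcal{S}\cap\mathcal{F}_\Lie^n=0$, so $x\in Z_\Lie(\mathcal{F})$; combined with $\mathcal{S}\subseteq Z_\Lie(\mathcal{F})$ this yields an isomorphism $\eta:\mathcal{F}/Z_\Lie(\mathcal{F})\to(\mathcal{F}/\mathcal{S})/Z_\Lie(\mathcal{F}/\mathcal{S})$. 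Compatibility of $\eta$ and $\xi$ with the $n$-ary \Lie-bracket is automatic since both are induced by the same projection $\pi$.

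The main obstacle is ensuring that the complement $\mathcal{S}$ supplied by Theorem \ref{th2}$(ii)$ really lies outside $\mathcal{F}_\Lie^n$; this relies on the intrinsic identification $\mathcal{R}\cap\mathcal{F}_\Lie^n=\mathcal{M}_\Lie(\q)$, rather than on the abstract existence of the splitting. Once that point is pinned down, the remainder is the routine \Lie-analogue of the classical isoclinism criterion ``\Lie-central plus \Lie-commutator-disjoint implies \Lie-isoclinic quotient'', which is already implicit in the framework of \cite{s-b,s-b2}.
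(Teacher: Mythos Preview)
Your argument is correct. The paper itself does not supply a proof of Corollary~\ref{coro5}; it is quoted from \cite{s-b2} as part of the preliminary material, so there is no ``paper's own proof'' to compare against line by line. Your route---apply Theorem~\ref{th2} to obtain $\s$, observe that $\mathcal{S}=\s/[\R,_{n-1}\f]_\Lie$ is \Lie-central in $\mathcal{F}=\f/[\R,_{n-1}\f]_\Lie$ with $\mathcal{S}\cap\mathcal{F}_\Lie^n=0$, and then invoke the standard criterion that quotienting by such an ideal yields a \Lie-isoclinic algebra---is precisely the argument one expects in \cite{s-b2}, and the paper uses the same ingredient $\s\cap\f_\Lie^n=[\R,_{n-1}\f]_\Lie$ elsewhere (see the proofs of Proposition~\ref{prop5} and Proposition~\ref{prop9}). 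Your caveat about the internal nature of the direct sum in Theorem~\ref{th2}(ii) is well placed: the identification of the $\mathcal{M}_\Lie(\q)$ summand with the specific subspace $(\R\cap\f_\Lie^n)/[\R,_{n-1}\f]_\Lie$ is indeed what makes $\mathcal{S}\cap\mathcal{F}_\Lie^n=0$ hold, and the paper treats that decomposition as internal throughout.
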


%%%%%%%%%%%%%%%%%%%%%%%%%%%%%%%%%%%%%%%%%%%%%%%%%%%%%%%%%%%%%%%%%%%%%%%%%%%%%%%%%%%%%%%%%%%%%%%%%%%%%%%%%%%%%%%%%%
%%%%%%%%%%%%%%%%%%%%%%%%%%%%%%%%%%%%%%%%%%%%%%%%%%%%%%%%%%%%%%%%%%%%%%%%%%%%%%%%%%%%%%%%%%%%%%%%%%%%%%%%%%%%%%%%%%

\section{On \Lie-covers  of Leibniz $n$-algebras}\label{Lie-cover}
This section is devoted to obtain some properties of  \Lie-covers  of  Leibniz $n$-algebras.

\begin{lemma}\label{lem6}
Let $\q$ be a Leibniz $n$-algebra and
\[\xymatrix{0\ar[r]&\m'\ar[r]\ar[d]_{\alpha}&\g'\ar[r]\ar[d]_{\beta}& \q\ar[r]\ar[d]_{\gamma}&0\\
 0\ar[r]&\m\ar[r]& \g\ar[r]& \q\ar[r]&0,}\]
 be a commutative diagram of Leibniz $n$-algebras such that the first row is exact and the second
one is a \Lie-stem extension of $\q$. If the homomorphism $\gamma$ is onto, then so is $\beta$.
\end{lemma}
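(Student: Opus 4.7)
The plan is to show $\beta(\g') = \g$ in two stages: first, use the surjectivity of $\gamma$ together with the exactness of the top row to show that $\g = \beta(\g') + \m$; then, use the stem/central hypothesis on the bottom extension to show that $\m$ itself sits inside $\beta(\g')$.

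For the first stage, denote the top and bottom surjections by $\pi':\g'\to\q$ and $\pi:\g\to\q$. Commutativity gives $\pi\circ\beta=\gamma\circ\pi'$. Since $\pi'$ is surjective and $\gamma$ is surjective by hypothesis, so is $\pi\circ\beta$; in other words, $\pi(\beta(\g'))=\q$. Combined with $\ker\pi=\m$, this yields the direct-sum-type decomposition $\g=\beta(\g')+\m$, so every $g\in\g$ has the form $g=\beta(y)+m$ with $y\in\g'$ and $m\in\m$.

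For the second stage, I would exploit that the bottom extension is a \Lie-stem extension, so $\m\subseteq\g_\Lie^n$. Hence an arbitrary $m\in\m$ is a linear combination of \Lie-brackets $[x_1,\ldots,x_n]_\Lie$ with $x_i\in\g$. Writing each $x_i=\beta(y_i)+m_i$ as above and expanding the $n$-linear bracket, one obtains $2^n$ summands, exactly one of which is $[\beta(y_1),\ldots,\beta(y_n)]_\Lie$. The key observation is that the Lie-bracket defined via $(\ref{eq1})$ is totally symmetric in its $n$ entries (it is the sum over all $n!$ permutations of a Leibniz bracket), so the condition $\m\subseteq Z_\Lie(\g)$ kills the Lie-bracket as soon as any one of the entries lies in $\m$, regardless of its position. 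Therefore every term containing at least one $m_i$ vanishes, and what remains is $[x_1,\ldots,x_n]_\Lie=[\beta(y_1),\ldots,\beta(y_n)]_\Lie=\beta\bigl([y_1,\ldots,y_n]_\Lie\bigr)\in\beta(\g')$. Consequently $m\in\beta(\g')$, hence $\m\subseteq\beta(\g')$, and combining with the first stage gives $\g=\beta(\g')$.

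The only subtle point I anticipate is justifying that centrality in a single slot of $[-,\ldots,-]_\Lie$ suffices to annihilate the \Lie-bracket in every slot; this is not a delicate calculation but it is what makes the argument go through, and it relies on the symmetric definition of $[-,\ldots,-]_\Lie$ rather than on any deeper property of the Leibniz $n$-bracket. The rest is purely a multilinear expansion and an exactness/diagram-chase.
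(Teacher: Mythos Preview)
Your proof is correct and follows essentially the same route as the paper: first establish $\g=\Ima\beta+\m$ from the surjectivity of $\gamma$, then use $\m\subseteq Z_\Lie(\g)$ and $\m\subseteq\g_\Lie^n$ to force $\m\subseteq\Ima\beta$. The paper phrases the second step more tersely at the level of ideals---writing $\g_\Lie^n=[\m,_{n-1}\g]_\Lie+[\Ima\beta,_{n-1}\g]_\Lie=[\Ima\beta,_{n-1}\g]_\Lie\subseteq\Ima\beta$---while you unfold the same computation elementwise, but the content (including the point that the symmetry of $[-,\ldots,-]_\Lie$ makes $\Lie$-centrality in one slot kill every slot) is identical.
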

\begin{proof}
Clearly, $\g=\m +\Ima\beta$ and hence $\g_\Lie^n=[\m,_{n-1}\g]_\Lie + [\Ima\beta,_{n-1}\g]_\Lie$. Now, since $\m\subseteq Z_\Lie(\g)$,
we have  ${\g}_\Lie^n=[\Ima\beta,_{n-1}\g]_\Lie$, and since $\m\subseteq\g_\Lie^n$, we get $\m\subseteq [\Ima\beta,_{n-1}\g]_\Lie\subseteq\Ima\beta$.
Therefore, $\g=\Ima\beta$.
\end{proof}

\begin{proposition}\label{prop5}
Let
$0\rightarrow\m\rightarrow\g\stackrel{\sigma}{\rightarrow}\q\rightarrow 0$ be a \Lie-stem extension of a finite
dimensional Leibniz $n$-algebra $\q$. Then $\g$ is a homomorphic image of
a \Lie-cover of $\q$.
\end{proposition}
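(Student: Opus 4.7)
The plan is to pass to a free presentation of $\q$, lift the identity $\mathrm{id}_\q$ by Lemma \ref{lem1}, conclude surjectivity by Lemma \ref{lem6}, and then extract a \Lie-cover whose defining ideal sits inside the kernel of that surjection.

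Concretely, fix a free presentation $0\to\R\to\f\stackrel{\pi}{\to}\q\to 0$ and set $\bar{\f}=\f/[\R,_{n-1}\f]_\Lie$, $\bar{\R}=\R/[\R,_{n-1}\f]_\Lie$, so that the row $0\to\bar{\R}\to\bar{\f}\stackrel{\bar{\pi}}{\to}\q\to 0$ is \Lie-central. Lemma \ref{lem1} with $\alpha=\mathrm{id}_\q$, targeted at the given \Lie-stem extension, yields a homomorphism $\beta:\bar{\f}\to\g$ with $\sigma\beta=\bar{\pi}$, and Lemma \ref{lem6} (with the identity on $\q$ on the right) forces $\beta$ to be surjective. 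Put $\bar{\s}_0=\ker\beta\subseteq\bar{\R}$; then $\bar{\f}/\bar{\s}_0\cong\g$, $\bar{\R}/\bar{\s}_0\cong\m$, and using $\m\subseteq\g_\Lie^n=\beta(\bar{\f}_\Lie^n)$ one checks that $\beta(\mathcal{M}_\Lie(\q))=\m$ (every $m\in\m$ is $\beta(y)$ for some $y\in\bar{\f}_\Lie^n$ with $\bar{\pi}(y)=\sigma(m)=0$, so $y\in\bar{\R}\cap\bar{\f}_\Lie^n=\mathcal{M}_\Lie(\q)$), whence $\bar{\R}=\bar{\s}_0+\mathcal{M}_\Lie(\q)$.

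By Corollary \ref{coro3} and Theorem \ref{th2}, there is an ideal $\bar{\s}^{\ast}$ of $\bar{\f}$ with $\bar{\R}=\mathcal{M}_\Lie(\q)\oplus\bar{\s}^{\ast}$, so that $\bar{\f}/\bar{\s}^{\ast}$ is a \Lie-cover of $\q$. The task reduces to deforming $\bar{\s}^{\ast}$ into a new ideal $\bar{\s}$ of $\bar{\f}$ that still complements $\mathcal{M}_\Lie(\q)$ in $\bar{\R}$ but is contained in $\bar{\s}_0$. Since $\beta|_{\mathcal{M}_\Lie(\q)}$ is surjective onto $\m$, the map $\beta|_{\bar{\s}^{\ast}}:\bar{\s}^{\ast}\to\m$ lifts to a linear $\phi:\bar{\s}^{\ast}\to\mathcal{M}_\Lie(\q)$, and the subspace $\bar{\s}=\{s-\phi(s):s\in\bar{\s}^{\ast}\}$ is then a vector-space complement of $\mathcal{M}_\Lie(\q)$ in $\bar{\R}$ lying inside $\bar{\s}_0$. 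Choosing $\phi$ compatibly with the $n$-ary bracket action of $\bar{\f}$ on $\bar{\s}^{\ast}$ makes $\bar{\s}$ an ideal; Theorem \ref{th2} then identifies $\bar{\f}/\bar{\s}$ as a \Lie-cover of $\q$, and the natural projection $\bar{\f}/\bar{\s}\twoheadrightarrow\bar{\f}/\bar{\s}_0\cong\g$ exhibits $\g$ as a homomorphic image of that \Lie-cover.

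The delicate point is the equivariance requirement on $\phi$ in the last step. In the Lie algebra analogue (Moneyhun), \Lie-centrality of $\bar{\R}$ coincides with ordinary centrality, so every subspace of $\bar{\R}$ is automatically an ideal of $\bar{\f}$ and $\bar{\s}$ may be chosen freely as a vector-space complement. In a Leibniz $n$-algebra, however, \Lie-centrality only forces the symmetrised bracket $[-,_{n-1}-]_\Lie$ to vanish on $\bar{\R}$, while the individual $n$-ary brackets $[\bar{r},\bar{x}_1,\ldots,\bar{x}_{n-1}]$ with $\bar{r}\in\bar{\R}$ need not vanish. Thus $\phi$ must be adjusted compatibly with all $n!$ bracket positions, which is achievable because the hypothesis $\m\subseteq Z_\Lie(\g)\cap\g_\Lie^n$ propagates the required equivariance from $\m$ back along $\beta|_{\mathcal{M}_\Lie(\q)}$ to $\mathcal{M}_\Lie(\q)$.
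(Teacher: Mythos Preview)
Your overall strategy matches the paper's: take a free presentation, apply Lemma~\ref{lem1} with $\alpha=1_\q$, use Lemma~\ref{lem6} to get surjectivity of $\beta$, verify that $\beta(\mathcal{M}_\Lie(\q))=\m$, and then look for a complement $\bar\s$ of $\mathcal{M}_\Lie(\q)$ in $\bar\R$ contained in $\bar\s_0=\ker\beta$ so that Theorem~\ref{th2} applies and $\bar\f/\bar\s$ is a \Lie-cover surjecting onto $\g\cong\bar\f/\bar\s_0$. Up to that point you and the paper agree.

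Where you diverge is in how $\bar\s$ is produced. You start from an arbitrary pre-existing complement $\bar\s^{\ast}$ (pulled from Corollary~\ref{coro3}) and attempt to slide it into $\bar\s_0$ via a lifting $\phi:\bar\s^{\ast}\to\mathcal{M}_\Lie(\q)$ of $\beta|_{\bar\s^{\ast}}$. The paper instead works \emph{inside} $\bar\s_0$ from the outset: it simply chooses $\bar\s$ to be a vector-space complement of $\bar\s_0\cap\bar\f_\Lie^n$ in $\bar\s_0$, and then checks directly (using the already-established equality $\bar\R=\bar\s_0+\mathcal{M}_\Lie(\q)$) that $\bar\s\oplus\mathcal{M}_\Lie(\q)=\bar\R$. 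This is both shorter and avoids any deformation step or auxiliary map $\phi$.

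The gap in your argument is the equivariance claim. You correctly flag that \Lie-centrality of $\bar\R$ does not force ordinary centrality, so a subspace of $\bar\R$ need not be an ideal of $\bar\f$; but your resolution (``choosing $\phi$ compatibly with the $n$-ary bracket action \ldots\ is achievable because the hypothesis $\m\subseteq Z_\Lie(\g)\cap\g_\Lie^n$ propagates the required equivariance'') is not a proof. You have given no mechanism for producing such a $\phi$: the lifting problem is a linear one (lift $\beta|_{\bar\s^{\ast}}$ through $\beta|_{\mathcal{M}_\Lie(\q)}$), and there is no reason a solution should respect the $n!$ separate bracket actions of $\bar\f$. The paper's direct construction sidesteps this detour entirely, since $\bar\s$ is chosen inside the ideal $\bar\s_0$ from the start rather than being deformed into it; you should replace your deformation paragraph by that one-line choice.
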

\begin{proof}
Let $0\rightarrow\R\rightarrow\f\stackrel{\pi}{\rightarrow}\q\rightarrow 0$ be a free presentation of $\q$. By Lemma \ref{lem1},
there exists a homomorphism
$\beta:\dfrac{\f}{[\R,_{n-1}\f]_\Lie}\to\g$ such that the following diagram is commutative:
\[\xymatrix{0\ar[r]&\dfrac{\R}{[\R,_{n-1}\f]_\Lie}\ar[r]\ar[d]_{\beta|}&\dfrac{\f}{[\R,_{n-1}\f]_\Lie}\ar[r]^{\ \ \ \ \ \bar{\pi}}\ar[d]_{\beta}& \q\ar[r]\ar[d]_{1_\q}&0\\
 0\ar[r]&\m\ar[r]& \g\ar[r]^{\sigma}& \q\ar[r]&0.}\]
 Lemma \ref{lem6} implies that $\beta$ is  an epimorphism and hence $\beta(\dfrac{\R}{[\R,_{n-1}\f]_\Lie})=\m$. Now, put $\ker\beta|=\ker\beta=\U/[\R,_{n-1}\f]_\Lie$,
 for some ideal $\U$ in $\R$. Thus $\g\cong\f/\U$ and $\m\cong\R/\U$. Clearly,
 \[\beta((\R\cap\f_\Lie^n)/[\R,_{n-1}\f]_\Lie)\subseteq\beta(\R/[\R,_{n-1}\f]_\Lie)\cap\beta(\f_\Lie^n/[\R,_{n-1}\f]_\Lie)=\m\cap\g_\Lie^n=\m.\]
 Conversely, let $y\in \beta(\R/[\R,_{n-1}\f]_\Lie)\cap\beta(\f_\Lie^n/[\R,_{n-1}\f]_\Lie)$.
 Then $y=\beta(r+[\R,_{n-1}\f]_\Lie)=\beta(x+[\R,_{n-1}\f]_\Lie)$, for some $r\in\R$ and $x\in \f_\Lie^n$. Therefore,
 $x-r+[\R,_{n-1}\f]_\Lie\in\ker\beta=\U/[\R,_{n-1}\f]_\Lie\subseteq\R/[\R,_{n-1}\f]_\Lie$. Hence, $x\in\R$ and
 $y\in \beta((\R\cap\f_\Lie^n)/[\R,_{n-1}\f]_\Lie)$. This shows that $\beta$ may be restricted to an epimorphism from
 $(\R\cap\f_\Lie^n)/[\R,_{n-1}\f]_\Lie$ onto $\m$. Thus
 \[\dfrac{\R}{\U}\cong\dfrac{\R/[\R,_{n-1}\f]_\Lie}{\U/[\R,_{n-1}\f]_\Lie}\cong\m\cong
 \dfrac{(\R\cap\f_\Lie^n)/[\R,_{n-1}\f]_\Lie}{(\U\cap\f_\Lie^n)/[\R,_{n-1}\f]_\Lie}\cong\dfrac{(\R\cap\f_\Lie^n)+\U}{\U},\]
 and by finite dimensionality, we have $\R=(\R\cap\f_\Lie^n)+\U$. Now, let $\s/[\R,_{n-1}\f]_\Lie$ be a complement of
 $(\U\cap\f_\Lie^n)/[\R,_{n-1}\f]_\Lie$ in $\U/[\R,_{n-1}\f]_\Lie$. Then $\s\cap(\R\cap\f_\Lie^n)=[\R,_{n-1}\f]_\Lie$
 and $\s+(\R\cap\f_\Lie^n)=\R$. Therefore,
 \[\dfrac{\R}{[\R,_{n-1}\f]_\Lie}\cong\mathcal{M}_\Lie(\q)\oplus\dfrac{\s}{[\R,_{n-1}\f]_\Lie}.\]
 Now, Theorem \ref{th2} implies that $\f/\s\cong\dfrac{\f/\U}{\s/\U}=\dfrac{\g}{\s/\U}$ is a \Lie-cover of $\q$.
\end{proof}

A Leibniz $n$-algebra $\q$ is said to be Hopfian, if every epimorphism $\varphi:\q\to\q$ is an
isomorphism (see also \cite{sal2}). Here, we give certain conditions under which any \Lie-cover of $\q$ is
a Hopfian Leibniz $n$-algebra.

\begin{proposition}\label{prop7}
Let $\q$ be a Leibniz $n$-algebra with finite dimensional Schur \Lie-multiplier, and
$0\rightarrow\m_i\rightarrow\g_i\rightarrow\q\rightarrow 0$ $(i=1,2)$ be two \Lie-stem covers  of $\q$.
If $\gamma:\g_1\to\g_2$ is an epimorphism such that $\gamma(\m_1)=\m_2$, then $\gamma$ is an isomorphism.
\end{proposition}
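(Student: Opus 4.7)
The plan is to reduce the problem to a dimension count on the restriction $\gamma|_{\m_1}\colon \m_1\to\m_2$. Denote the structure epimorphisms of the two $\Lie$-stem covers by $\sigma_i\colon\g_i\to\q$, so that $\m_i=\ker\sigma_i$. Since $\gamma$ is given as a morphism of $\Lie$-stem covers of $\q$, we work with the commutative diagram
\[\xymatrix{0\ar[r]&\m_1\ar[r]\ar[d]_{\gamma|}&\g_1\ar[r]^{\sigma_1}\ar[d]_{\gamma}& \q\ar[r]\ar[d]_{1_\q}&0\\
 0\ar[r]&\m_2\ar[r]& \g_2\ar[r]^{\sigma_2}& \q\ar[r]&0,}\]
whose existence is ensured by the hypothesis $\gamma(\m_1)=\m_2$ and the fact that the induced map $\bar{\gamma}\colon\q\to\q$ is forced to be the identity in this context.

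First, I would use commutativity to locate the kernel: for any $g\in\g_1$ with $\gamma(g)=0$, we have $\sigma_1(g)=\sigma_2\gamma(g)=0$, so $g\in\m_1$. Hence $\ker\gamma\subseteq\m_1$, which means $\ker\gamma=\ker(\gamma|_{\m_1})$.

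Next, I would invoke the defining property of a $\Lie$-stem cover: by assumption $\m_1\cong\mathcal{M}_\Lie(\q)\cong\m_2$, and the common dimension $d=\dim\mathcal{M}_\Lie(\q)$ is finite. The restriction $\gamma|_{\m_1}\colon\m_1\to\m_2$ is surjective by the hypothesis $\gamma(\m_1)=\m_2$. A surjective linear map between finite-dimensional vector spaces of equal dimension is necessarily bijective, so $\gamma|_{\m_1}$ is an isomorphism; in particular $\ker(\gamma|_{\m_1})=0$. Combined with the previous step, $\ker\gamma=0$, and together with the given surjectivity of $\gamma$ this yields that $\gamma$ is an isomorphism.

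The only delicate point is the first step: verifying that $\ker\gamma$ really is trapped inside $\m_1$, which is where the compatibility of $\gamma$ with the projections onto $\q$ is used essentially. Beyond that, the argument is a clean five-lemma-style observation combined with finite-dimensionality of $\mathcal{M}_\Lie(\q)$; no further use of the structure of the Leibniz $n$-bracket is needed.
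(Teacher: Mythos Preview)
The gap is in the first step. The hypothesis only says that $\gamma:\g_1\to\g_2$ is an epimorphism with $\gamma(\m_1)=\m_2$; it does \emph{not} say $\sigma_2\circ\gamma=\sigma_1$, so your claim that ``the induced map $\bar\gamma:\q\to\q$ is forced to be the identity in this context'' is unjustified. What you actually obtain from $\gamma(\m_1)=\m_2$ is an induced \emph{epimorphism} $\bar\gamma:\q\to\q$, and since $\q$ is not assumed finite-dimensional or Hopfian there is no reason for $\bar\gamma$ to be injective. Consequently your key implication ``$\gamma(g)=0\Rightarrow\sigma_1(g)=0$'' breaks down: from $\gamma(g)=0$ you only get $\bar\gamma(\sigma_1(g))=0$, hence $g\in\gamma^{-1}(\m_2)$, which may strictly contain $\m_1$. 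Your dimension argument on $\m_1,\m_2$ still gives $\ker\gamma\cap\m_1=0$, but that is not enough. That this generality is really needed is clear from the immediate application in Corollary~\ref{coro8}, where $\gamma$ is an \emph{arbitrary} self-epimorphism of a cover and nothing ties it to the projection onto $\q$.

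The paper's proof is accordingly more indirect. It passes to a free presentation $\q\cong\f/\R$, realizes both covers as $\f/\s_i$ via Theorem~\ref{th2}, and uses freeness of $\f$ to produce a lift $\alpha:\f/[\R,_{n-1}\f]_\Lie\to\f/\s_1$ satisfying $\gamma\circ\alpha=\beta$, where $\beta$ is the canonical projection onto $\f/\s_2$; surjectivity of $\alpha$ then comes from Lemma~\ref{lem6}. The finite-dimensionality of $\mathcal{M}_\Lie(\q)$ is exploited upstairs, by comparing $\ker\alpha$ with $\ker\beta=\s_2/[\R,_{n-1}\f]_\Lie$ inside the decomposition $\R/[\R,_{n-1}\f]_\Lie\cong\mathcal{M}_\Lie(\q)\oplus\s_i/[\R,_{n-1}\f]_\Lie$, rather than directly on $\m_1$ and $\m_2$.
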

\begin{proof}
Let $0\rightarrow\R\rightarrow\f\stackrel{\pi}{\rightarrow}\q\rightarrow 0$ be a free presentation of $\q$. By Theorem \ref{th2},
there exist  ideals $\s_i$ $(i=1,2)$ of $\f$ such that  $\g_i\cong\f/\s_i$, $\m_i\cong\R/\s_i$
and $\R/[\R,_{n-1}\f]_\Lie\cong\mathcal{M}_\Lie(\q)\oplus\s_i/[\R,_{n-1}\f]_\Lie$.
Hence, one may consider the epimorphism  $\gamma:\f/\s_1\to\f/\s_2$ with $\gamma(\R/\s_1)=\R/\s_2$. Now, by the proof of Theorem \ref{th2},
there exists an epimorphism $\beta:\f/[\R,_{n-1}\f]_\Lie\to \f/\s_2$ such that $\ker\beta=\s_2/[\R,_{n-1}\f]_\Lie$.
Since $\f$ is free, there exists a homomorphism $\bar{\alpha}:\f\to\f/\s_1$ such that $\gamma\circ\bar{\alpha}=\beta\circ\pi'$,
where $\pi':\f\to{\f}/{[\R,_{n-1}\f]_\Lie}$ is the natural epimorphism. Clearly, $\bar{\alpha}$ induces a homomorphism
$\alpha:{\f}/{[\R,_{n-1}\f]_\Lie}\to{\f}/{\s_1}$ such that
the following diagram is commutative:
\[\xymatrix{0\ar[r]&\dfrac{\R}{[\R,_{n-1}\f]_\Lie} \ar[ddr]^{\beta_1}\ar[r]\ar[d]_{\alpha_1}&\dfrac{\f}{[\R,_{n-1}\f]_\Lie}\ar[ddr]^{\beta}
\ar[r]^{\ \ \ \ \ \bar{\pi}}\ar[d]_{\alpha}& \q\ar[ddr]\ar[r]\ar[d]_{\gamma_2^{-1}\circ\beta_2}&0\\
 0\ar[r]&\dfrac{\R}{\s_1}\ar[r]\ar[dr]_{\gamma_1}& \dfrac{\f}{\s_1}\ar[r]\ar[dr]_{\gamma}& \q\ar[r]^{\ \ \ \beta_2}
 \ar[dr]_{\gamma_2}&0\\
 &0\ar[r]&\dfrac{\R}{\s_2}\ar[r]& \dfrac{\f}{\s_2}\ar[r]& \q\ar[r]&0,}\]
where $\alpha_1$, $\beta_1$ and $\gamma_1$ are the restriction homomorphisms of $\alpha$, $\beta$ and $\gamma$, respectively, and
also $\beta_2$ and $\gamma_2$ are the induced isomorphisms of $\beta$ and $\gamma$, respectively. Now, Lemma \ref{lem6}
implies that $\alpha$ is onto. Put $\ker\alpha=\U/[\R,_{n-1}\f]_\Lie$, for some ideal $\U$ of $\f$. Then
$\f/\U\cong\f/\s_1$. On the other hand, since $\ker\alpha\subseteq\ker\beta$, we have
$\U\subseteq\s_2$, and since  $\U+(\R\cap\f_\Lie^n)=\R$, we get $\U=\s_2$, which shows that $\gamma$ is an isomorphism.
\end{proof}

\begin{corollary}\label{coro8}
If $\q$ is a \Lie-abelian Leibniz $n$-algebra with finite dimensional Schur \Lie-multiplier, then every \Lie-cover of $\q$ is
Hopfian.
\end{corollary}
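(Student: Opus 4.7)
The plan is to reduce the claim to Proposition \ref{prop7} applied to a single \Lie-cover viewed as two identical \Lie-stem covers. Fix a \Lie-stem cover $0\to\m\to\g\stackrel{\sigma}{\to}\q\to 0$ with $\m\cong\mathcal{M}_\Lie(\q)$, and let $\varphi:\g\to\g$ be an epimorphism. Taking $\g_1=\g_2=\g$, $\m_1=\m_2=\m$ and $\gamma=\varphi$ in Proposition \ref{prop7}, the task reduces to verifying the single hypothesis $\varphi(\m)=\m$; the proposition then guarantees that $\varphi$ is an isomorphism, i.e.\ that $\g$ is Hopfian.

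The crucial structural observation I would then extract from the \Lie-abelian assumption is the identification $\m=\g_\Lie^n$. Indeed, the stem cover condition gives $\m\subseteq\g_\Lie^n$, while $\q_\Lie^n=0$ together with $\sigma(\g_\Lie^n)=\sigma(\g)_\Lie^n=\q_\Lie^n$ forces $\g_\Lie^n\subseteq\ker\sigma=\m$. Hence $\m=\g_\Lie^n$. Since the \Lie-commutator is preserved under surjective homomorphisms, this yields
\[
\varphi(\m)=\varphi(\g_\Lie^n)=\varphi(\g)_\Lie^n=\g_\Lie^n=\m,
\]
which is precisely the hypothesis needed to invoke Proposition \ref{prop7}.

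I do not expect a serious obstacle: the whole argument turns on the single point that \Lie-abelianness of the quotient collapses the kernel of the cover onto the full \Lie-commutator of the cover, a subobject which is automatically stable under any surjective endomorphism. The only mild bookkeeping item is to confirm that Proposition \ref{prop7} is formulated generally enough to be applied to the degenerate situation $(\g_1,\m_1)=(\g_2,\m_2)$ with $\gamma$ an endomorphism, which an inspection of its statement and proof confirms.
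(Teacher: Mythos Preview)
Your proposal is correct and follows essentially the same route as the paper's proof: the paper also shows $\m=\g_\Lie^n$ from the \Lie-abelian hypothesis (via $\g_\Lie^n/\m=(\g/\m)_\Lie^n=0$ together with the stem condition $\m\subseteq\g_\Lie^n$), then deduces $\gamma(\m)=\gamma(\g_\Lie^n)=\g_\Lie^n=\m$ and applies Proposition~\ref{prop7}. Your write-up is slightly more explicit about the two inclusions and about the preservation of the \Lie-commutator under surjections, but the argument is the same.
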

\begin{proof}
Suppose that $0\rightarrow\m\rightarrow\g\rightarrow\q\rightarrow 0$ is a \Lie-stem cover of $\q$. Since
$\dfrac{\g_\Lie^n}{\m}=\big{(}\dfrac{\g}{\m}\big{)}_\Lie^n=0$, we have $\g_\Lie^n=\m$.
Now, if $\gamma:\g\to\g$ is an epimorphism, then $\gamma(\m)=\gamma(\g_\Lie^n)=\g_\Lie^n=\m$
and  hence by the above theorem, $\gamma$ is an isomorphism.
\end{proof}

\begin{proposition}\label{prop9}
Let $\q$ be a Leibniz $n$-algebra with finite dimensional Schur \Lie-multiplier, and
$0\rightarrow\m_i\rightarrow\g_i\rightarrow\q\rightarrow 0$ $(i=1,2)$ be two \Lie-stem covers  of $\q$.
Then $Z_\Lie(\g_1)/\m_1\cong Z_\Lie(\g_2)/\m_2$.
\end{proposition}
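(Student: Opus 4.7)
The plan is to realize both \Lie-stem covers as quotients of a single free Leibniz $n$-algebra, and then to identify $Z_\Lie(\g_i)/\m_i$ with a subquotient of $\f$ that is manifestly independent of the cover chosen.

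First I would fix a free presentation $0\to\R\to\f\stackrel{\pi}{\to}\q\to 0$. By Theorem \ref{th2} (together with the explicit construction that appears in the proof of Proposition \ref{prop5}), each \Lie-stem cover is of the form $\g_i\cong\f/\s_i$ with $\m_i\cong\R/\s_i$, for an ideal $\s_i$ of $\f$ satisfying
\[
[\R,_{n-1}\f]_\Lie\subseteq\s_i\subseteq\R,\qquad \s_i+(\R\cap\f_\Lie^n)=\R,\qquad \s_i\cap(\R\cap\f_\Lie^n)=[\R,_{n-1}\f]_\Lie.
\]
The last equality encodes the directness of the splitting in clause (ii) of Theorem \ref{th2}, and is the feature I will exploit.

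Next I would describe $Z_\Lie(\g_i)$ explicitly. Setting $\T_i=\{x\in\f\mid [x,_{n-1}\f]_\Lie\subseteq\s_i\}$, one checks that $\T_i$ is an ideal of $\f$ and $Z_\Lie(\g_i)=\T_i/\s_i$. Because $[\R,_{n-1}\f]_\Lie\subseteq\s_i$, we have $\R\subseteq\T_i$, so the third isomorphism theorem gives
\[
Z_\Lie(\g_i)/\m_i\;\cong\;\T_i/\R.
\]
Hence it suffices to prove $\T_1=\T_2$.

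The heart of the argument, and the one step that is not formal, is this last equality. Because $[x,_{n-1}\f]_\Lie\subseteq\f_\Lie^n$ automatically, one may rewrite
\[
\T_i=\{x\in\f\mid [x,_{n-1}\f]_\Lie\subseteq\s_i\cap\f_\Lie^n\}.
\]
The key identity is then $\s_i\cap\f_\Lie^n=[\R,_{n-1}\f]_\Lie$: the inclusion $\s_i\subseteq\R$ forces $\s_i\cap\f_\Lie^n\subseteq\R\cap\f_\Lie^n$, so $\s_i\cap\f_\Lie^n=\s_i\cap(\R\cap\f_\Lie^n)=[\R,_{n-1}\f]_\Lie$, while the reverse inclusion is immediate from $[\R,_{n-1}\f]_\Lie\subseteq\s_i\cap\f_\Lie^n$. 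Consequently
\[
\T_i=\{x\in\f\mid [x,_{n-1}\f]_\Lie\subseteq[\R,_{n-1}\f]_\Lie\}
\]
is independent of $i$, whence $Z_\Lie(\g_1)/\m_1\cong\T_1/\R=\T_2/\R\cong Z_\Lie(\g_2)/\m_2$. The main obstacle is precisely the recognition that, once $\s_i\subseteq\R$ and the directness condition from Theorem \ref{th2} are combined, $\s_i\cap\f_\Lie^n$ is forced to equal $[\R,_{n-1}\f]_\Lie$ regardless of the choice of cover.
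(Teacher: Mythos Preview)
Your proof is correct and follows essentially the same route as the paper: both realize the covers as $\f/\s_i$ via Theorem~\ref{th2}, use the key identity $\s_i\cap\f_\Lie^n=[\R,_{n-1}\f]_\Lie$ (which is exactly the step the paper invokes), and conclude that the preimage of $Z_\Lie(\g_i)$ in $\f$ is the cover-independent ideal $\{x\in\f\mid[x,_{n-1}\f]_\Lie\subseteq[\R,_{n-1}\f]_\Lie\}$. The only cosmetic difference is that the paper names this ideal $\U$ up front via $Z_\Lie(\f/[\R,_{n-1}\f]_\Lie)=\U/[\R,_{n-1}\f]_\Lie$ and then proves $Z_\Lie(\f/\s)=\U/\s$, whereas you start from $\T_i$ and show it coincides with that description; the logic is identical.
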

\begin{proof}
Let $0\rightarrow\R\rightarrow\f\rightarrow\q\rightarrow 0$ and $0\rightarrow\m\rightarrow\g\rightarrow\q\rightarrow 0$
 be a free presentation and a \Lie-stem cover of $\q$, respectively.
  Theorem \ref{th2} implies that
there exists  an ideal $\s$  of $\f$ such that  $\g\cong\f/\s$, $\m\cong\R/\s$
and $\R/[\R,_{n-1}\f]_\Lie\cong\mathcal{M}_\Lie(\q)\oplus\s/[\R,_{n-1}\f]_\Lie$.
 Now, put $Z_\Lie(\f/[\R,_{n-1}\f]_\Lie)=\U/[\R,_{n-1}\f]_\Lie$, for some ideal $\U$ of $\f$. Clearly, $[\U,_{n-1}\f]_\Lie\subseteq[\R,_{n-1}\f]_\Lie$
 and hence $\U/\s\subseteq Z_\Lie(\f/\s)$. Conversely, let $x+\s\in Z_\Lie(\f/\s)$. Then
  $[x,_{n-1}\f]_\Lie\subseteq \s\cap\f_\Lie^n=[\R,_{n-1}\f]_\Lie$, which implies that $x+[\R,_{n-1}\f]_\Lie\in Z_\Lie(\f/[\R,_{n-1}\f]_\Lie)$.
  Therefore, $x+\s\in\U/\s$ and hence  $Z_\Lie(\f/\s)=\U/\s$. Thus
  $Z_\Lie(\g)/\m\cong \U/\R$, which completes the proof.
\end{proof}

In what follows, we obtain some results on \Lie-covers of a \Lie-perfect Leibniz $n$-algebra. A Leibniz $n$-algebra $\q$ is said to be \Lie-perfect,
whenever $\q=\q_\Lie^n$.

Let $G_i:0\rightarrow\m_i\rightarrow\g_i\stackrel{\sigma_i}{\rightarrow}\q\rightarrow 0$ $(i=1,2)$ be two \Lie-central extensions  a  Leibniz $n$-algebra $\q$. Recall from \cite{s-b2} that $(\beta|,\beta,1_\q):G_1\longrightarrow G_2$ is  a homomorphism of \Lie-central extensions, if the following diagram of homomorphisms is commutative:
\[\xymatrix{G_1:0\ar[r]&\m_1\ar[r]\ar[d]_{\beta|}&\g_1\ar[r]^{\sigma_1}\ar[d]_{\beta}& \q\ar[r]\ar[d]_{1_\q}&0\\
G_2: 0\ar[r]&\m_2\ar[r]& \g_2\ar[r]^{\sigma_2}& \q\ar[r]&0.
}\]
 Moreover, the \Lie-central extension $G_1$ is said to be universal, if
 for every  \Lie-central extension $G_2$, there exists a unique homomorphism $(\beta|,\beta,1_\q):G_1\longrightarrow G_2$.\\

The following result shows that every \Lie-perfect Leibniz $n$-algebra admits at least one \Lie-cover.

\begin{theorem}\label{th10}
Let $\q$ be a \Lie-perfect Leibniz $n$-algebra with a free presentation $\q\cong\f/\R$. Then
 the \Lie-central extension
 \begin{equation}\label{e1}
 0\longrightarrow\mathcal{M}_\Lie(\q)\longrightarrow\dfrac{\f_\Lie^n}{[\R,_{n-1}\f]_\Lie}\stackrel{\rho}{\longrightarrow}\q\longrightarrow 0
 \end{equation}
    where $\rho(x+[\R,_{n-1}\f]_\Lie)=x+\R$, is a \Lie-stem cover of $\q$. Moreover, it is a universal \Lie-central extension.
\end{theorem}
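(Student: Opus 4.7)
My plan is to verify three claims in sequence: (a) \eqref{e1} is an exact $\Lie$-central extension with kernel $\mathcal{M}_\Lie(\q)$; (b) its middle term $\g:=\f_\Lie^n/[\R,_{n-1}\f]_\Lie$ is itself $\Lie$-perfect, which promotes \eqref{e1} to a $\Lie$-stem cover; (c) \eqref{e1} is universal among $\Lie$-central extensions of $\q$. Claim (a) is straightforward: the full symmetry of $[-,\ldots,-]_\Lie$ in its $n$ arguments, visible from \eqref{eq1}, together with $\R$ being an ideal gives $[\R,_{n-1}\f]_\Lie\subseteq\R$, so $\rho$ is well defined; the kernel is $(\R\cap\f_\Lie^n)/[\R,_{n-1}\f]_\Lie=\mathcal{M}_\Lie(\q)$; surjectivity uses $\q=\q_\Lie^n$, which forces $\f=\f_\Lie^n+\R$; and centrality follows from $[\R\cap\f_\Lie^n,_{n-1}\f_\Lie^n]_\Lie\subseteq[\R,_{n-1}\f]_\Lie$.

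The real obstacle is claim (b). The key input is once again full symmetry: any $\Lie$-bracket $[y_1,\ldots,y_n]_\Lie$ containing at least one factor from $\R$ lies in $[\R,_{n-1}\f]_\Lie$, because symmetry allows that $\R$-factor to be moved into the first slot. Combined with $\f=\f_\Lie^n+\R$, I would decompose each $a_j$ in an arbitrary $[a_1,\ldots,a_n]_\Lie$ as $b_j+r_j$ with $b_j\in\f_\Lie^n$ and $r_j\in\R$, expand multilinearly into $2^n$ summands, and observe that every summand containing at least one $r_j$ vanishes modulo $[\R,_{n-1}\f]_\Lie$. Hence every class in $\g$ is represented by a sum of $\Lie$-brackets with entries in $\f_\Lie^n$, i.e.\ $\g=\g_\Lie^n$. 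In particular $\mathcal{M}_\Lie(\q)\subseteq\g_\Lie^n$, so \eqref{e1} is a $\Lie$-stem extension, and with kernel $\mathcal{M}_\Lie(\q)$ it is a $\Lie$-stem cover.

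For claim (c), let $G_2:0\rightarrow\m_2\rightarrow\g_2\stackrel{\sigma_2}{\rightarrow}\q\rightarrow 0$ be any $\Lie$-central extension. Lemma \ref{lem1} applied with $\p=\q$ and $\alpha=1_\q$ delivers a homomorphism $\beta:\f/[\R,_{n-1}\f]_\Lie\rightarrow\g_2$ over $\q$; restricting $\beta$ to $\g$ gives the desired morphism of $\Lie$-central extensions from \eqref{e1} to $G_2$. For uniqueness, suppose $\phi,\psi:\g\rightarrow\g_2$ are two lifts of $1_\q$. Then $\phi(x)-\psi(x)\in\ker\sigma_2=\m_2\subseteq Z_\Lie(\g_2)$ for every $x\in\g$. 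Writing $\phi(x_i)=\psi(x_i)+m_i$ with $m_i\in\m_2$ and expanding $[\phi(x_1),\ldots,\phi(x_n)]_\Lie$ multilinearly, every summand containing some $m_i$ is killed by centrality combined with symmetry (which lets $m_i$ sit in any one of the $n$ slots). Hence $\phi$ and $\psi$ agree on $\g_\Lie^n$, and since $\g=\g_\Lie^n$, $\phi=\psi$. The common mechanism across the whole argument is the interplay between full symmetry of $[-,\ldots,-]_\Lie$ and the one-slot centrality condition, which drives both the $\Lie$-perfectness of $\g$ and the rigidity needed for universality.
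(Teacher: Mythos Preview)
Your proposal is correct and follows essentially the same route as the paper. The paper condenses your claim~(b) into the single line $\f_\Lie^n=(\f_\Lie^n+\R)_\Lie^n=(\f_\Lie^n)_\Lie^n+[\R,_{n-1}\f]_\Lie$, and for uniqueness in~(c) replaces one $\varphi_1(x_i)$ by $\varphi_2(x_i)$ at a time rather than expanding all $2^n$ summands at once, but both arguments rest on exactly the symmetry--centrality mechanism you isolate.
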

\begin{proof}
Since $\q$ is \Lie-perfect, we have $\f=\f_\Lie^n+\R$  and hence $\f_\Lie^n=(\f_\Lie^n+\R)_\Lie^n=(\f_\Lie^n)_\Lie^n+[\R,_{n-1}\f]_\Lie$. Thus
 $$\Big{(}\dfrac{\f_\Lie^n}{[\R,_{n-1}\f]_\Lie}\Big{)}_\Lie^n =  \dfrac{\f_\Lie^n}{[\R,_{n-1}\f]_\Lie},$$
which actually shows that $\f_\Lie^n/[\R,_{n-1}\f]_\Lie$ is a \Lie-perfect Leibniz $n$-algebra. Therefore, the \Lie-central extension (\ref{e1}) is a \Lie-stem cover of $\q$.

Now, let $G:0\rightarrow\m\rightarrow\g\stackrel{\sigma}{\rightarrow}\q\rightarrow 0$ be an arbitrary \Lie-central extension of $\q$.
By Lemma \ref{lem1}, we have the following commutative diagram:
\[\xymatrix{F:0\ar[r]&\mathcal{M}_\Lie(\q)\ar[r]\ar[d]&\dfrac{\f_\Lie^n}{[\R,_{n-1}\f]_\Lie}\ar[r]^{\ \ \ \ \ \rho}\ar[d]_{\subseteq}& \q\ar[r]\ar[d]_{1_\q}&0\\
\ \ \ \ \  0\ar[r]&\dfrac{\R}{[\R,_{n-1}\f]_\Lie}\ar[r]\ar[d]_{\beta|}& \dfrac{\f}{[\R,_{n-1}\f]_\Lie}\ar[r]^{\ \ \ \ \ \bar{\pi}}\ar[d]_{\beta}& \q\ar[r]\ar[d]_{1_\q}&0\\
G: 0\ar[r]&\m\ar[r]& \g\ar[r]^{\sigma}& \q\ar[r]&0,}\]
which shows that there exists a homomorphism from $F$ to $G$. We show that this homomorphism is unique. So, let
$\varphi_i:\f_\Lie^n/[\R,_{n-1}\f]_\Lie\to\g$ $(i=1,2)$ be two homomorphisms such that the following diagram is commutative:
\[\xymatrix{F:0\ar[r]&\mathcal{M}_\Lie(\q)\ar[r]\ar[d]_{\varphi_i|}&\dfrac{\f_\Lie^n}{[\R,_{n-1}\f]_\Lie}\ar[r]^{\ \ \ \ \ \rho}\ar[d]_{\varphi_i}& \q\ar[r]\ar[d]_{1_\q}&0\\
G: 0\ar[r]&\m\ar[r]& \g\ar[r]^{\sigma}& \q\ar[r]&0.}\]
Therefore, $\sigma\circ\varphi_1=\rho=\sigma\circ\varphi_2$ and hence $\varphi_1(x)-\varphi_2(x)\in\ker\sigma\subseteq Z_\Lie(\g)$, for all
$x\in\f_\Lie^n/[\R,_{n-1}\f]_\Lie$. Now, we prove that $\varphi_1([x_1,\ldots,x_n]_\Lie)=\varphi_2([x_1,\ldots,x_n]_\Lie)$, for all
$x_i\in\f_\Lie^n/[\R,_{n-1}\f]_\Lie$ and since $\f_\Lie^n/[\R,_{n-1}\f]_\Lie$ is \Lie-perfect, we get $\varphi_1=\varphi_2$, which completes the proof.
Clearly,
\begin{equation*}
[\varphi_1(x_1)-\varphi_2(x_1),\varphi_1(x_2),\ldots,\varphi_1(x_n)]_\Lie =0,
\end{equation*}
and so
\begin{equation*}
\varphi_1([x_1,\ldots,x_n]_\Lie)=[\varphi_2(x_1),\varphi_1(x_2),\ldots,\varphi_1(x_n)]_\Lie=\ast.
\end{equation*}
On the other hand, since
\begin{equation*}
[\varphi_2(x_1),\varphi_1(x_2)-\varphi_2(x_2),\varphi_1(x_3),\ldots,\varphi_1(x_n)]_\Lie =0,
\end{equation*}
 we have
\begin{equation*}
\ast=[\varphi_2(x_1),\varphi_2(x_2),\varphi_1(x_3),\ldots,\varphi_1(x_n)]_\Lie.
\end{equation*}
 By repeating the above process, we finally obtain
  \begin{equation*}
\ast=[\varphi_2(x_1),\varphi_2(x_2),\ldots,\varphi_2(x_n)]_\Lie=\varphi_2([x_1,\ldots,x_n]_\Lie).
\end{equation*}
\end{proof}

In \cite{l2}, Loday discussed a necessary and sufficient condition for a pair of groups to be perfect,
using a homological method.
In the next theorem, we show this result for
a Leibniz $n$-algebra.

\begin{theorem}\label{th11}
A Leibniz $n$-algebra $\q$ is \Lie-perfect if and only if it has a universal \Lie-central extension.
\end{theorem}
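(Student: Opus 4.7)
The plan is to treat the two directions separately. For the ``only if'' direction, I appeal directly to Theorem \ref{th10}: when $\q$ is \Lie-perfect, the \Lie-stem cover $0 \to \mathcal{M}_\Lie(\q) \to \f_\Lie^n / [\R, {}_{n-1}\f]_\Lie \to \q \to 0$ constructed there is already shown to be a universal \Lie-central extension, so this implication is immediate.

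For the converse, I intend to adapt Loday's group-theoretic argument from \cite{l2} to the relative Leibniz $n$-algebra setting. Assume $\q$ admits a universal \Lie-central extension $G_1 : 0 \to \m \to \g \stackrel{\sigma}{\to} \q \to 0$. If $\q$ were not \Lie-perfect, I would exhibit a second \Lie-central extension of $\q$ together with two honestly distinct morphisms from $G_1$, contradicting the uniqueness clause of the universal property. The natural candidate is built from $A := \q/\q_\Lie^n$, which is a nonzero \Lie-abelian Leibniz $n$-algebra precisely when $\q \neq \q_\Lie^n$.

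Concretely, I would form $H = \g \oplus A$ endowed with the componentwise (direct product) bracket, together with the map $\tau : H \to \q$, $(g, a) \mapsto \sigma(g)$. Since the induced \Lie-bracket on $H$ decomposes componentwise, $\m \subseteq Z_\Lie(\g)$, and $A$ is \Lie-abelian, the kernel $\m \oplus A$ of $\tau$ sits inside $Z_\Lie(H)$, so $\tau$ defines a \Lie-central extension $G_2$. Writing $\bar{\sigma} = p \circ \sigma : \g \to A$ for $p : \q \to A$ the canonical projection, I would introduce the two maps $\phi_1(g) = (g, 0)$ and $\phi_2(g) = (g, \bar{\sigma}(g))$. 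Both satisfy $\tau \circ \phi_i = \sigma$ and carry $\m$ into $\m \oplus A$ (since $\bar{\sigma}(\m) = 0$), so they yield two morphisms $G_1 \to G_2$ of \Lie-central extensions. Uniqueness then forces $\phi_1 = \phi_2$, i.e., $\bar{\sigma} \equiv 0$ on $\g$, which by surjectivity of $\sigma$ delivers $\q = \q_\Lie^n$.

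The main obstacle is calibrating the bracket on $H$ so that $\phi_1$ and $\phi_2$ are both honest Leibniz $n$-algebra homomorphisms landing in a bona fide \Lie-central extension; the direct-product choice, coupled with $A$'s \Lie-abelianness, is precisely what simultaneously arranges both. The check for $\phi_1$ is automatic, while for $\phi_2$ it reduces to the naturality identity $[\bar{\sigma}(g_1), \ldots, \bar{\sigma}(g_n)]_A = \bar{\sigma}([g_1, \ldots, g_n]_\g)$, which holds because $p$ is a Leibniz $n$-algebra homomorphism.
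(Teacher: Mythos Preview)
Your proof is correct and follows essentially the same Loday-style argument as the paper: both directions match, with the converse handled by building a product \Lie-central extension and exhibiting two morphisms that universality must collapse. The only cosmetic difference is that the paper takes the auxiliary \Lie-abelian factor to be $\g/\g_\Lie^n$ (first concluding $\g$ is \Lie-perfect, then pushing forward along $\sigma$), whereas you use $\q/\q_\Lie^n$ and conclude $\q=\q_\Lie^n$ directly.
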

\begin{proof}
If $\q$ is \Lie-perfect, then  the \Lie-central extension (\ref{e1}) in  Theorem \ref{th10} is universal.
Conversely, let $G:0\rightarrow\m\rightarrow\g\stackrel{\sigma}{\rightarrow}\q\rightarrow 0$ be
a universal \Lie-central extension of $\q$.
Clearly,
\[0\longrightarrow\m\times\dfrac{\g}{\g_\Lie^n}\longrightarrow\g\times\dfrac{\g}{\g_\Lie^n}\stackrel{\delta}{\longrightarrow}\q\longrightarrow 0,\]
where $\delta(x,y+\g_\Lie^n)=\sigma(x)$, is a  \Lie-central extension of $\q$. Define homomorphisms
$\psi_i:\g\longrightarrow\g\times\g/\g_\Lie^n$ ($i=1,2$) by $\psi_1(x)=(x,0)$ and $\psi_2(x)=(x,\sigma(x)+\g_\Lie^n)$. It is easy to see that
the following diagram is commutative ($i=1,2$):
\[\xymatrix{G:0\ar[r]&\m\ar[r]\ar[d]_{\psi_i|}&\g\ar[r]^{\sigma}\ar[d]_{\psi_i}& \q\ar[r]\ar[d]_{1_\q}&0\\
\ \ \ \ \ 0\ar[r]&\m\times\dfrac{\g}{\g_\Lie^n}\ar[r]& \g\times\dfrac{\g}{\g_\Lie^n}\ar[r]^{\ \ \ \ \delta}& \q\ar[r]&0.}\]
Then the universal property of $G$ implies that $\psi_1=\psi_2$. Therefore, $\g$ is \Lie-perfect and since $\sigma$ is onto, $\q$
is a \Lie-perfect Leibniz $n$-algebra, as well.
\end{proof}

The next result easily follows from Theorem \ref{th10} and Corollary \ref{coro5}.

\begin{corollary}\label{coro12}
Let $\q$ be a \Lie-perfect Leibniz $n$-algebra with finite dimensional Schur \Lie-multiplier,
and $\g$ be a \Lie-cover of $\q$. Then  $\g\sim\dfrac{\f_\Lie^n}{[\R,_{n-1}\f]_\Lie}\sim\dfrac{\f}{[\R,_{n-1}\f]_\Lie}$, where $\f$ is a free Leibniz $n$-algebra such that $\q\cong\f/\R$.
\end{corollary}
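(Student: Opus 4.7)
The plan is to chain together the three previously established results: Theorem \ref{th10}, Corollary \ref{coro4}, and Corollary \ref{coro5}. Since \Lie-isoclinism is an equivalence relation, it suffices to identify two canonical representatives in the \Lie-isoclinism class of any \Lie-cover of $\q$ and then transport along these equivalences.

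First, I would invoke Theorem \ref{th10}: because $\q$ is \Lie-perfect and $\q \cong \f/\R$, the extension
\[
0 \longrightarrow \mathcal{M}_\Lie(\q) \longrightarrow \dfrac{\f_\Lie^n}{[\R,_{n-1}\f]_\Lie} \longrightarrow \q \longrightarrow 0
\]
is a \Lie-stem cover of $\q$. Hence $\f_\Lie^n/[\R,_{n-1}\f]_\Lie$ is itself a \Lie-cover of $\q$. Now $\q$ has finite dimensional Schur \Lie-multiplier by assumption, so Corollary \ref{coro4} applies: all \Lie-stem covers of $\q$ are \Lie-isoclinic. In particular, the given \Lie-cover $\g$ is \Lie-isoclinic to $\f_\Lie^n/[\R,_{n-1}\f]_\Lie$, which yields the first of the two claimed equivalences.

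For the second equivalence, I would apply Corollary \ref{coro5} to the \Lie-cover $\g$ (or equivalently to $\f_\Lie^n/[\R,_{n-1}\f]_\Lie$): it gives $\g \sim \f/[\R,_{n-1}\f]_\Lie$. Combining this with the first step via the transitivity of $\sim$ produces the full chain
\[
\g \sim \dfrac{\f_\Lie^n}{[\R,_{n-1}\f]_\Lie} \sim \dfrac{\f}{[\R,_{n-1}\f]_\Lie},
\]
which is precisely the statement of the corollary.

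There is really no substantive obstacle here; the result is a direct bookkeeping corollary of the earlier work. The only point that warrants a sentence of care is verifying that the finite dimensionality of $\mathcal{M}_\Lie(\q)$ is what makes Corollaries \ref{coro4} and \ref{coro5} applicable, and noting explicitly that \Lie-isoclinism is an equivalence relation so that transitivity may be used to glue the two equivalences into a single chain.
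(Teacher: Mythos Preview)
Your proposal is correct and essentially matches the paper's own argument, which simply states that the result ``easily follows from Theorem \ref{th10} and Corollary \ref{coro5}.'' The only minor difference is that you invoke Corollary \ref{coro4} explicitly to link $\g$ with $\f_\Lie^n/[\R,_{n-1}\f]_\Lie$, whereas the paper presumably applies Corollary \ref{coro5} to both covers and uses transitivity; either way the content is identical.
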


It is well-known that isomorphism  and isoclinism are
equivalent for finite dimensional stem Lie algebras and stem  $n$-Lie algebras (see \cite{m,e-m-s}).
But this is not true for finite dimensional $\Lie$-stem Leibniz $n$-algebras ($n\geq 2$) (see \cite{r-c}). Moreover in \cite[Corollary 3]{r-c}, it is discussed certain conditions under which  these concepts are equivalent.

\begin{lemma}\label{lem13}
Let $\q$ and $\p$ be two finite dimensional \Lie-isoclinic \Lie-stem complex Leibniz
$2$-algebras and for all elements $x_1,x_2\in\q$, there exists $\varepsilon_{12}\in\mathbb{C}$ such that $[x_1,x_2]=\varepsilon_{12} [x_2,x_1]$.
Then  $\q$ and $\p$ are isomorphic.
\end{lemma}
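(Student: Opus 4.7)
The plan is to convert the \Lie-isoclinism data into an explicit Leibniz-algebra isomorphism $\varphi\colon\q\to\p$, with the scalar-bracket hypothesis serving as the bridge between the \Lie-bracket (which the isoclinism preserves) and the full Leibniz bracket (which $\varphi$ must preserve). First, I would unfold the definition of \Lie-isoclinism to obtain compatible isomorphisms $\alpha\colon\q/Z_\Lie(\q)\to\p/Z_\Lie(\p)$ and $\beta\colon\q_\Lie^2\to\p_\Lie^2$ satisfying $\beta([a,b]_\Lie)=[a',b']_\Lie$ whenever $\alpha(a+Z_\Lie(\q))=a'+Z_\Lie(\p)$ and $\alpha(b+Z_\Lie(\q))=b'+Z_\Lie(\p)$. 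The \Lie-stem condition forces $Z_\Lie(\q)\subseteq\q_\Lie^2$ and $Z_\Lie(\p)\subseteq\p_\Lie^2$, so $\beta$ is already defined on $Z_\Lie(\q)$; a routine diagram chase then yields $\beta(Z_\Lie(\q))=Z_\Lie(\p)$ and, in particular, $\dim\q=\dim\p$.

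Next I would pick a flag of complementary subspaces $\q=W\oplus U\oplus Z_\Lie(\q)$ with $\q_\Lie^2=U\oplus Z_\Lie(\q)$, together with bases $\{w_k\}$ of $W$, $\{u_l\}$ of $U$, $\{z_j\}$ of $Z_\Lie(\q)$, and lifts $w_k'\in\p$ of $\alpha(w_k+Z_\Lie(\q))$. Defining $\varphi(w_k)=w_k'$, $\varphi(u_l)=\beta(u_l)$, $\varphi(z_j)=\beta(z_j)$ and extending linearly produces a vector-space isomorphism $\varphi\colon\q\to\p$ whose restriction to $\q_\Lie^2$ coincides with $\beta$.

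The crux is verifying that $\varphi$ preserves the Leibniz bracket. Linearity of $\varphi$, the identification $\varphi|_{\q_\Lie^2}=\beta$, and the isoclinism compatibility jointly yield
\[\varphi([a,b])+\varphi([b,a])=\varphi([a,b]_\Lie)=[\varphi(a),\varphi(b)]_\Lie=[\varphi(a),\varphi(b)]+[\varphi(b),\varphi(a)],\]
so combined with the scalar hypothesis $[a,b]=\varepsilon_{ab}[b,a]$, bracket preservation reduces to establishing the analogous relation $[\varphi(a),\varphi(b)]=\varepsilon_{ab}[\varphi(b),\varphi(a)]$ inside $\p$. This transfer of scalars is the main obstacle. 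In the generic case $\varepsilon_{ab}\neq -1$ one has $[b,a]=\frac{1}{1+\varepsilon_{ab}}[a,b]_\Lie\in\q_\Lie^2$, so applying $\beta$ realises the desired identity on the $\p$-side and gives $\varphi([a,b])=[\varphi(a),\varphi(b)]$. The delicate case is $\varepsilon_{ab}=-1$, where $[a,b]_\Lie=0$ provides no direct information; here I would exploit the Leibniz identity to rewrite $[a,b]$ as a combination of brackets whose associated scalars differ from $-1$, reducing to the generic case, with the \Lie-stem inclusion $Z_\Lie(\q)\subseteq\q_\Lie^2$ ensuring that the needed auxiliary brackets remain in the range where $\beta$ determines $\varphi$. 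The remaining cases, with $a$ or $b$ in $Z_\Lie(\q)$, follow directly from $[a,b]_\Lie=0$, the scalar hypothesis, and $\varphi|_{Z_\Lie(\q)}=\beta$.
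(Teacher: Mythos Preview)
The paper does not prove this lemma itself; it is quoted from \cite{r-c}, and the only structural information the paper records about that argument is that it rests on the identity $Z_\Lie(\q)=Z(\q)$, valid for every \Lie-stem Leibniz $2$-algebra (see \cite[Lemma~4]{r-c}, as noted immediately after the statement). Your outline never invokes this fact, and that omission is connected to a genuine gap.

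The gap sits in your ``generic case''. With $[a,b]=\varepsilon[b,a]$ and $\varepsilon\neq -1$ you correctly get $[b,a]=\tfrac{1}{1+\varepsilon}[a,b]_\Lie\in\q_\Lie^2$, hence $\varphi([b,a])=\beta([b,a])=\tfrac{1}{1+\varepsilon}[\varphi(a),\varphi(b)]_\Lie$. But to deduce $\varphi([a,b])=[\varphi(a),\varphi(b)]$ you must split $[\varphi(a),\varphi(b)]_\Lie=[\varphi(a),\varphi(b)]+[\varphi(b),\varphi(a)]$ back into its two summands in the ratio $\varepsilon:1$; that is, you need $[\varphi(a),\varphi(b)]=\varepsilon\,[\varphi(b),\varphi(a)]$ inside $\p$. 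The scalar hypothesis is stated only for $\q$, and nothing in your construction transfers it to $\p$: the isoclinism datum $\beta$ controls only the symmetrised bracket $[\,\cdot\,,\,\cdot\,]_\Lie$, not the two ordinary brackets separately. The sentence ``applying $\beta$ realises the desired identity on the $\p$-side'' is thus precisely the unjustified step. The case $\varepsilon=-1$ is even more tenuous: rewriting $[a,b]$ through the Leibniz identity produces further brackets with their own scalars, and you give no mechanism ensuring these avoid $-1$ or that the rewriting terminates.

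What you are missing is exactly the input the cited proof uses. Once $Z_\Lie(\p)=Z(\p)$, the ordinary bracket $[\varphi(a),\varphi(b)]$ in $\p$ is independent of the chosen lifts (it depends only on $\alpha(\bar a),\alpha(\bar b)$), and this is what allows one to pin down the individual Leibniz brackets on the $\p$-side rather than only their sum. Your vector-space construction of $\varphi$ is fine; it is the verification of multiplicativity that cannot be completed without this additional control.
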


Note that in the proof of above result,  authors use the fact that $Z_{\Lie}(\q)= Z(\q)$, for any $\Lie$-stem Leibniz $2$-algebra $\q$,
 where $Z(\q)=\{z\in \q|\ [x,z]=[z,x]=0, \forall x\in\q\}$ is the center of $\q$
(see \cite[Lemma 4]{r-c}). But this is not true for \Lie-stem Leibniz $n$-algebras for $n\geq3$.
 In \cite[Example 4.5]{s-b}, it is given a  \Lie-stem complex Leibiniz 3-algebra whose \Lie-center
is not central. Hence, we discuss the following result for Leibniz 2-algebras.

\begin{corollary}\label{coro14}
Let $\q$ be a \Lie-perfect complex Leibniz $2$-algebra with finite dimensional Schur \Lie-multiplier
with a free presentation $\q\cong\f/\R$, in which for all elements
$x_1,x_2\in[\f,\f]_\Lie/[\R,\f]_\Lie$ there exists $\varepsilon_{12}\in\mathbb{C}$ such that $[x_1,x_2]=\varepsilon_{12} [x_2,x_1]$.
 If $\g$ is a \Lie-cover of $\q$, then  $\g\cong[\f,\f]_\Lie/[\R,\f]_\Lie$.
\end{corollary}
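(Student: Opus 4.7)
The plan is to identify $[\f,\f]_\Lie/[\R,\f]_\Lie$ as the canonical \Lie-stem cover of $\q$ produced by Theorem \ref{th10}, and then combine the \Lie-isoclinism of any two \Lie-covers (Corollary \ref{coro12}) with the isomorphism criterion of Lemma \ref{lem13} to upgrade \Lie-isoclinism to an actual isomorphism. Specializing Theorem \ref{th10} to $n=2$, so that $\f_\Lie^n=[\f,\f]_\Lie$ and $[\R,_{n-1}\f]_\Lie=[\R,\f]_\Lie$, the \Lie-perfectness of $\q$ yields the \Lie-stem cover
\[0\longrightarrow\mathcal{M}_\Lie(\q)\longrightarrow\dfrac{[\f,\f]_\Lie}{[\R,\f]_\Lie}\longrightarrow\q\longrightarrow 0,\]
so that $\mathcal{H}:=[\f,\f]_\Lie/[\R,\f]_\Lie$ is itself a \Lie-cover of $\q$.

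By Corollary \ref{coro12}, every \Lie-cover of $\q$ is \Lie-isoclinic to $\mathcal{H}$; in particular, $\g\sim\mathcal{H}$. Both are \Lie-stem by the definition of a \Lie-cover, both are complex Leibniz $2$-algebras, and both are finite-dimensional (implicit from the finite-dimensionality of $\q$ together with that of $\mathcal{M}_\Lie(\q)$, via the short exact sequences defining the two covers). I would then apply Lemma \ref{lem13} taking its ``$\q$'' to be $\mathcal{H}$ and its ``$\p$'' to be $\g$: the scalar-antisymmetry hypothesis demanded by Lemma \ref{lem13} on its first argument is precisely the hypothesis of the present corollary on $[\f,\f]_\Lie/[\R,\f]_\Lie=\mathcal{H}$. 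The lemma then delivers $\g\cong\mathcal{H}$, which is the conclusion.

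The main point to handle carefully is the role assignment in Lemma \ref{lem13}: the scalar-antisymmetry is only assumed for one of the two \Lie-isoclinic algebras there, and we must take that algebra to be $\mathcal{H}$, for which the hypothesis is explicit, rather than $\g$, for which it is not directly known. Once this is arranged, there are no further calculations to make; every other ingredient (the explicit cover from Theorem \ref{th10}, the \Lie-isoclinism from Corollary \ref{coro12}, and the isomorphism criterion from Lemma \ref{lem13}) is already in place.
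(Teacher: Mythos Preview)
Your argument is essentially the paper's: invoke Corollary~\ref{coro12} for the \Lie-isoclinism $\g\sim[\f,\f]_\Lie/[\R,\f]_\Lie$, then Lemma~\ref{lem13} to upgrade to an isomorphism. One small correction: your line ``Both are \Lie-stem by the definition of a \Lie-cover'' conflates a \Lie-stem \emph{extension} with a \Lie-stem \emph{algebra} (the former only gives $\m\subseteq Z_\Lie(\g)\cap\g_\Lie^2$, not $Z_\Lie(\g)\subseteq\g_\Lie^2$); the paper instead observes that every \Lie-cover of a \Lie-perfect $\q$ is itself \Lie-perfect---since $\m\subseteq\g_\Lie^2$ and $(\g/\m)_\Lie^2=\g/\m$ force $\g=\g_\Lie^2$---and \Lie-perfect trivially entails \Lie-stem, which is the hypothesis Lemma~\ref{lem13} actually needs.
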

\begin{proof}
Clearly, every \Lie-cover of  $\q$ is \Lie-perfect. Now, Corollary \ref{coro12} and Lemma \ref{lem13}  complete the proof.
\end{proof}

%%%%%%%%%%%%%%%%%%%%%%%%%%%%%%%%%%%%%%%%%%%%%%%%%%%%%%%%%%%%%%%%%%%%%%%%%%%%%%%%%%%%%%%%%%%%%%%%%%%%%%%%%%%%%%%%%%
%%%%%%%%%%%%%%%%%%%%%%%%%%%%%%%%%%%%%%%%%%%%%%%%%%%%%%%%%%%%%%%%%%%%%%%%%%%%%%%%%%%%%%%%%%%%%%%%%%%%%%%%%%%%%%%%%%

\section{On dimension of the Schur \Lie-multiplier of a Leibniz $n$-algebra}
In this section, we give some equalities and inequalities for the dimension of the Schur \Lie-multiplier of a Leibniz $n$-algebra.

\begin{proposition}\label{prop15}\cite[Proposition 4.1]{s-b2}
Let $0 \to \R\to \f \stackrel{\pi} \to \q\to 0$ be a free presentation of a Leibniz $n$-algebra $\q$. Also,
let $\m$ be an ideal of $\q$ and $\s$ an ideal of $\f$ such that $\m\cong\s/\R$. Then the following sequences are exact:
\begin{itemize}
\item[$(i)$] $0 \to \dfrac{\R\cap [\s,_{n-1}\f]_\Lie}{[\R,_{n-1}\f]_\Lie}\to  \mathcal{M}_\Lie(\q) \to \mathcal{M}_\Lie(\dfrac{\q}{\m}) \to \dfrac{\m \cap\q_\Lie^n}{[\m,_{n-1}\q]_\Lie} \to 0$,
\item[$(ii)$] $\mathcal{M}_\Lie(\q) \to \mathcal{M}_\Lie(\dfrac{\q}{\m}) \to \dfrac{\m \cap\q_\Lie^n}{[\m,_{n-1}\q]_\Lie} \to\dfrac{\m}{[\m,_{n-1}\q]_\Lie}
\to\dfrac{\q}{\q_\Lie^n}\to \dfrac{\q}{\m+\q_\Lie^n}\to 0$.
\end{itemize}
\end{proposition}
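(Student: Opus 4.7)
The plan is to work everything through the free presentation, using $\q = \f/\R$, $\m = \s/\R$, and $\q/\m \cong \f/\s$, together with the defining formulas $\mathcal{M}_\Lie(\q) = (\R \cap \f_\Lie^n)/[\R,_{n-1}\f]_\Lie$ and $\mathcal{M}_\Lie(\q/\m) = (\s \cap \f_\Lie^n)/[\s,_{n-1}\f]_\Lie$. Since $\R \subseteq \s$, the inclusion $\R \hookrightarrow \s$ sends $[\R,_{n-1}\f]_\Lie$ into $[\s,_{n-1}\f]_\Lie$ and $\R \cap \f_\Lie^n$ into $\s \cap \f_\Lie^n$, and hence induces a natural homomorphism $\phi:\mathcal{M}_\Lie(\q) \to \mathcal{M}_\Lie(\q/\m)$. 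Its kernel consists of classes $x + [\R,_{n-1}\f]_\Lie$ with $x \in (\R \cap \f_\Lie^n) \cap [\s,_{n-1}\f]_\Lie$; using $[\s,_{n-1}\f]_\Lie \subseteq \f_\Lie^n$, this intersection collapses to $\R \cap [\s,_{n-1}\f]_\Lie$, yielding the first three terms of (i).

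For the cokernel of $\phi$, I would pull everything up through $\f$ and then project back down. By the modular law (using $\R \subseteq \s$), one gets $\q_\Lie^n = (\f_\Lie^n + \R)/\R$, $\m \cap \q_\Lie^n = ((\s \cap \f_\Lie^n) + \R)/\R$, and $[\m,_{n-1}\q]_\Lie = ([\s,_{n-1}\f]_\Lie + \R)/\R$. Consider the map $\s \cap \f_\Lie^n \longrightarrow (\m \cap \q_\Lie^n)/[\m,_{n-1}\q]_\Lie$ that sends $a$ to the class of $a + \R$. Surjectivity is a direct diagram chase from the isomorphism theorems, and an element $a$ lies in the kernel iff $a \in [\s,_{n-1}\f]_\Lie + \R$; decomposing $a = b + r$ with $b \in [\s,_{n-1}\f]_\Lie \subseteq \f_\Lie^n$ forces $r = a - b \in \R \cap \f_\Lie^n$, so the kernel is exactly $[\s,_{n-1}\f]_\Lie + (\R \cap \f_\Lie^n)$. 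Since $\Ima \phi = ((\R \cap \f_\Lie^n) + [\s,_{n-1}\f]_\Lie)/[\s,_{n-1}\f]_\Lie$, this identifies $\mathrm{coker}(\phi)$ with $(\m \cap \q_\Lie^n)/[\m,_{n-1}\q]_\Lie$ and completes (i).

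For (ii), I would extend by three natural maps: the inclusion $(\m \cap \q_\Lie^n)/[\m,_{n-1}\q]_\Lie \hookrightarrow \m/[\m,_{n-1}\q]_\Lie$; the map $\m/[\m,_{n-1}\q]_\Lie \to \q/\q_\Lie^n$ induced by $\m \subseteq \q$, which is well defined because $[\m,_{n-1}\q]_\Lie \subseteq \q_\Lie^n$; and the canonical projection $\q/\q_\Lie^n \twoheadrightarrow \q/(\m+\q_\Lie^n)$. Exactness at $\m/[\m,_{n-1}\q]_\Lie$ is immediate: a class with representative $x \in \m$ dies in $\q/\q_\Lie^n$ iff $x \in \m \cap \q_\Lie^n$, which is precisely the image of the inclusion. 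Exactness at $\q/\q_\Lie^n$ follows because both the image of the previous map and the kernel of the projection equal $(\m + \q_\Lie^n)/\q_\Lie^n$, and the final surjectivity is clear. Exactness at $\mathcal{M}_\Lie(\q/\m)$ and at $(\m \cap \q_\Lie^n)/[\m,_{n-1}\q]_\Lie$ is already guaranteed by (i).

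The main obstacle is the cokernel identification in (i): the computation rests on the modular law together with the crucial containment $[\s,_{n-1}\f]_\Lie \subseteq \f_\Lie^n$, and each kernel/image identification must be verified on the nose rather than up to a diagram chase, because the \Lie-bracket in a Leibniz $n$-algebra is built from $n!$ ordinary $n$-brackets (see (\ref{eq1})) and one must make sure no cancellation is being silently used. Once this identification is in hand, the rest of the exact sequence in (ii) is a straightforward assembly of natural maps and standard isomorphism-theorem exactness checks.
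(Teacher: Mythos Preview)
The paper does not supply its own proof of this proposition: it is quoted verbatim from \cite[Proposition~4.1]{s-b2} and used as a black box, so there is no argument in the present paper to compare against. Your proof is correct and is in fact the standard route for establishing such a Hopf-type exact sequence: identify $\mathcal{M}_\Lie(\q/\m)$ via the free presentation $\f/\s$, compute the kernel and cokernel of the inclusion-induced map $\phi$ using the modular law and the containment $[\s,_{n-1}\f]_\Lie\subseteq\f_\Lie^n$, and then splice on the obvious tail for (ii). The one place worth a second glance is your cokernel computation, and there the key step---that an element $a\in\s\cap\f_\Lie^n$ lying in $[\s,_{n-1}\f]_\Lie+\R$ actually lies in $[\s,_{n-1}\f]_\Lie+(\R\cap\f_\Lie^n)$---is exactly the decomposition you wrote down, so nothing is missing.
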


\begin{corollary}\label{coro16}
Under the notation of the above result, if $\m\subseteq Z_\Lie(\q)$ then the following sequence is exact:
\[\m\otimes^{n-1}\dfrac{\q}{_n{\Leib(\q)}}\to  \mathcal{M}_\Lie(\q) \to \mathcal{M}_\Lie(\dfrac{\q}{\m}) \to \m \cap\q_\Lie^n  \to 0,\]
where   $_n{\sf Leib(\q)}=\left\langle [x_1,\ldots,x_i,\ldots,x_j,\ldots, x_n]~|~\exists i,j : x_i=x_j~\mbox{with}~ x_1,\ldots, x_n\in\q \right\rangle.$
\end{corollary}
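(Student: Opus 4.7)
The plan is to use Proposition~\ref{prop15} to obtain the right half of the sequence and then construct the leftmost arrow by hand.

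First, the hypothesis $\m\subseteq Z_\Lie(\q)$ gives $[\m,_{n-1}\q]_\Lie=0$. Substituting into Proposition~\ref{prop15}(ii), both $\dfrac{\m\cap\q_\Lie^n}{[\m,_{n-1}\q]_\Lie}$ and $\dfrac{\m}{[\m,_{n-1}\q]_\Lie}$ simplify to $\m\cap\q_\Lie^n$ and $\m$ respectively, and the arrow between them is the canonical inclusion, which is injective. Truncating yields exactness of
\[\mathcal{M}_\Lie(\q)\to\mathcal{M}_\Lie(\q/\m)\to\m\cap\q_\Lie^n\to 0.\]
From Proposition~\ref{prop15}(i), the kernel of $\mathcal{M}_\Lie(\q)\to\mathcal{M}_\Lie(\q/\m)$ equals $K:=\dfrac{\R\cap[\s,_{n-1}\f]_\Lie}{[\R,_{n-1}\f]_\Lie}$, and $\Lie$-centrality of $\m$ further forces $[\s,_{n-1}\f]_\Lie\subseteq\R$, so $K=\dfrac{[\s,_{n-1}\f]_\Lie}{[\R,_{n-1}\f]_\Lie}$.

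Next I would construct the leftmost arrow by lifting. Define
\[\phi:\m\otimes^{n-1}\dfrac{\q}{_n{\Leib(\q)}}\to\mathcal{M}_\Lie(\q),\qquad m\otimes\bar{q_2}\otimes\cdots\otimes\bar{q_n}\mapsto [\tilde{m},\tilde{q_2},\ldots,\tilde{q_n}]_\Lie+[\R,_{n-1}\f]_\Lie,\]
where $\tilde{m}\in\s$ lifts $m$ under the identification $\s/\R\cong\m$ and each $\tilde{q_j}\in\f$ lifts a representative $q_j\in\q$ of $\bar{q_j}$. The image automatically lies in $K$. Two well-definedness checks are routine: changing $\tilde{m}$ by an element of $\R$ alters the bracket by an element of $[\R,_{n-1}\f]_\Lie$, and changing any $\tilde{q_j}$ by an element of $\R$ alters the bracket by an element of $[\R,_{n-1}\f]_\Lie$ thanks to the full symmetry of $[-,\ldots,-]_\Lie$ in all its slots (apparent from equation~(\ref{eq1})).

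The main obstacle is the third well-definedness check: if a representative $q_j$ is shifted by a generator $[y_1,\ldots,y_n]\in{_n\Leib(\q)}$ with $y_i=y_\ell$, the resulting bracket $[\tilde{m},\tilde{q_2},\ldots,[\tilde{y_1},\ldots,\tilde{y_n}],\ldots,\tilde{q_n}]_\Lie$ must lie in $[\R,_{n-1}\f]_\Lie$. My approach is to expand every summand of the outer $\Lie$-symmetrization via the Leibniz $n$-identity, pushing the inner bracket in the $\tilde{y}$'s outward. The resulting summands can be reorganized, using the symmetry of $[-,\ldots,-]_\Lie$ and the repetition $\tilde{y_i}=\tilde{y_\ell}$, into brackets with an inner factor of the form $[\tilde{m},\tilde{y_k},\tilde{f_3},\ldots,\tilde{f_n}]_\Lie$; by $\Lie$-centrality of $m$ this factor lies in $\R$, whence the entire expression lies in $[\R,_{n-1}\f]_\Lie$.

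Finally, $\phi$ is surjective onto $K$: every spanning element $[\tilde{m}',\tilde{f_2},\ldots,\tilde{f_n}]_\Lie$ of $[\s,_{n-1}\f]_\Lie$ (with $\tilde{m}'\in\s$ and $\tilde{f_j}\in\f$) equals $\phi(m'\otimes\overline{p_2}\otimes\cdots\otimes\overline{p_n})$, where $m'\in\m$ and $p_j\in\q$ denote the projections of $\tilde{m}'$ and $\tilde{f_j}$ respectively. Together with the kernel identification above, this yields exactness at $\mathcal{M}_\Lie(\q)$, completing the proof.
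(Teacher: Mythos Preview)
Your proof is correct and follows the same route as the paper: reduce to Proposition~\ref{prop15}(i), simplify the endpoints using $[\s,_{n-1}\f]_\Lie\subseteq\R$ and $[\m,_{n-1}\q]_\Lie=0$, and construct the same lifting map $\varphi$ with image $[\s,_{n-1}\f]_\Lie/[\R,_{n-1}\f]_\Lie$. The paper is in fact terser---it simply asserts well-definedness of $\varphi$ from $\Lie$-centrality without the detailed checks you provide (and your detour through part~(ii) is unnecessary, since part~(i) already yields the full four-term sequence).
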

\begin{proof}
Since  $\m\subseteq Z_\Lie(\q)$, we have $[\s,_{n-1}\f]_\Lie\subseteq\R$. Then
by Proposition \ref{prop15} (i), the following  sequence is exact:
\[0 \to \dfrac{[\s,_{n-1}\f]_\Lie}{[\R,_{n-1}\f]_\Lie}\to  \mathcal{M}_\Lie(\q) \to \mathcal{M}_\Lie(\dfrac{\q}{\m}) \to \m \cap\q_\Lie^n \to 0.\]
Now, define
\begin{equation}
\begin{array}{rcl}
\varphi:\m\otimes^{n-1}\dfrac{\q}{_n\Leib(\q)}&\longrightarrow & \dfrac{\R\cap\f_\Lie^n}{[\R,_{n-1}\f]_\Lie}=\mathcal{M}_\Lie(\q)\\ \\ \nonumber
m\otimes \bar{x}_1\otimes\cdots\otimes\bar{x}_{n-1}&\longmapsto & [s,f_1,\ldots,f_{n-1}]_\Lie+[\R,_{n-1}\f]_\Lie,
\end{array}
\end{equation}
where $\pi(s)=m$ and $\pi(f_i)=x_i$, for all $1\leq i\leq n-1$. Since $\m$ is \Lie-central,  $\varphi$ is a well-defined homomorphism. Clearly,
$\Ima\varphi={[\s,_{n-1}\f]_\Lie}/{[\R,_{n-1}\f]_\Lie}$, which completes the proof.
\end{proof}
Note that the  $n$-Lie algebra $\q/_n{\Leib(\q)}$ is actually a \Lie-abelian Leibniz $n$-algebra (see Remark \ref{rem17}).
The next result easily follows from  Proposition \ref{prop15}  and the following commutative diagram.
\begin{equation*}
\xymatrix{
 \R \cap[\s,_{n-1}\f]_\Lie \ar[r] \ar[d] & \R \cap \f_\Lie^n \ar[r] \ar[d] & \dfrac{\R \cap\f_\Lie^n}{\R \cap [\s,_{n-1}\f]_\Lie} \ar[d]\\
[\s,_{n-1}\f]_\Lie \ar[r] \ar[d] & \s \cap \f_\Lie^n \ar[r] \ar[d] & \dfrac{\s \cap \f_\Lie^n}{[\s,_{n-1}\f]_\Lie} \ar[d]\\
 [\m,_{n-1}\q]_\Lie \cong \dfrac{[\s,_{n-1}\f]_\Lie}{\R \cap [\s,_{n-1}\f]_\Lie} \ar[r]& \m \cap \q_\Lie^n \ar[r] &
 \dfrac{\m \cap  \q_\Lie^n}{[\m,_{n-1}\q]_\Lie}}
\end{equation*}

\begin{corollary}\label{coro17}
Under the notation of Proposition \ref{prop15}, if $\q$ is a finite dimensional Leibniz $n$-algebra, then
\begin{itemize}
\item[$(i)$] $\mathcal{M}_\Lie(\q)$ is finite dimensional,
\item[$(ii)$] $\dim\mathcal{M}_\Lie(\dfrac{\q}{\m}) \leq \dim\mathcal{M}_\Lie(\q) + \dim\dfrac{\m \cap\q_\Lie^n}{[\m,_{n-1}\q]_\Lie}$,
\item[$(iii)$] $\dim\mathcal{M}_\Lie(\q) + \dim(\m \cap\q_\Lie^n) = \dim\mathcal{M}_\Lie(\dfrac{\q}{\m}) + \dim[\m,_{n-1}\q]_\Lie + \dim\dfrac{\R\cap [\s,_{n-1}\f]_\Lie}{[\R,_{n-1}\f]_\Lie}$,
\item[$(iv)$]  $\dim\mathcal{M}_\Lie(\q) + \dim(\m \cap\q_\Lie^n) = \dim\mathcal{M}_\Lie(\dfrac{\q}{\m}) +
\dim\dfrac{[\s,_{n-1}\f]_\Lie}{[\R,_{n-1}\f]_\Lie}$,
\item[$(v)$] $\dim\mathcal{M}_\Lie(\q) + \dim\q_\Lie^n = \dim\dfrac{\f_\Lie^n}{[\R,_{n-1}\f]_\Lie}$,
\item[$(vi)$] if $\mathcal{M}_\Lie(\q)=0$, then $\mathcal{M}_\Lie(\dfrac{\q}{\m})\cong\dfrac{\m \cap\q_\Lie^n}{[\m,_{n-1}\q]_\Lie}$,
\item[$(vii)$] if $\m$ is a \Lie-central ideal of $\q$, then
\begin{eqnarray*}
\dim\mathcal{M}_\Lie(\q) + \dim(\m \cap\q_\Lie^n) &\leq& \dim\mathcal{M}_\Lie(\dfrac{\q}{\m}) +
\dim\big{(}\m\otimes^{n-1}\dfrac{\q}{_n{\Leib(\q)}}\big{)}\\
&\leq&\dim\mathcal{M}_\Lie(\dfrac{\q}{\m}) +
\dim\big{(}\m\otimes^{n-1}\dfrac{\q}{\q_\Lie^n}\big{)}.
\end{eqnarray*}
\end{itemize}
\end{corollary}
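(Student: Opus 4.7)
The plan is to derive all seven items from the exact sequence in Proposition \ref{prop15}(i), the derived Corollary \ref{coro16}, and the three-by-three commutative diagram displayed just above the statement, using standard dimension counting for four-term exact sequences of vector spaces. Part (i) will be the entry point; the rest will then be mechanical.

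For (i), the Schur \Lie-multiplier of a finite dimensional Leibniz $n$-algebra is a priori a quotient of the subspace $\R\cap\f_\Lie^n$ of the free algebra, so its finite dimensionality is not immediate from the definition. I would obtain it by invoking Corollary \ref{coro3}: every finite dimensional Leibniz $n$-algebra admits a \Lie-cover, i.e.\ a \Lie-stem extension $0\to\m\to\g\to\q\to 0$ with $\m\cong\mathcal{M}_\Lie(\q)$ whose middle term is finite dimensional; the ideal $\m$ is then necessarily finite dimensional, whence $\dim\mathcal{M}_\Lie(\q)<\infty$. This makes every dimension that appears below finite and legitimate.

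For (ii)--(vi) I would exploit the four-term exact sequence in Proposition \ref{prop15}(i):
\[0 \to \tfrac{\R\cap[\s,_{n-1}\f]_\Lie}{[\R,_{n-1}\f]_\Lie}\to \mathcal{M}_\Lie(\q)\to \mathcal{M}_\Lie(\q/\m)\to \tfrac{\m\cap\q_\Lie^n}{[\m,_{n-1}\q]_\Lie}\to 0,\]
which yields the identity $\dim\tfrac{\R\cap[\s,_{n-1}\f]_\Lie}{[\R,_{n-1}\f]_\Lie}+\dim\mathcal{M}_\Lie(\q/\m)=\dim\mathcal{M}_\Lie(\q)+\dim\tfrac{\m\cap\q_\Lie^n}{[\m,_{n-1}\q]_\Lie}$. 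Truncating this to the surjection $\mathcal{M}_\Lie(\q)\twoheadrightarrow\mathcal{M}_\Lie(\q/\m)/(\cdot)$ gives (ii). Adding $\dim[\m,_{n-1}\q]_\Lie$ to both sides of the identity and using $\dim(\m\cap\q_\Lie^n)=\dim\tfrac{\m\cap\q_\Lie^n}{[\m,_{n-1}\q]_\Lie}+\dim[\m,_{n-1}\q]_\Lie$ produces (iii). For (iv) I would read off from the displayed $3\times 3$ diagram the middle-row relation $\dim\tfrac{[\s,_{n-1}\f]_\Lie}{[\R,_{n-1}\f]_\Lie}=\dim[\m,_{n-1}\q]_\Lie+\dim\tfrac{\R\cap[\s,_{n-1}\f]_\Lie}{[\R,_{n-1}\f]_\Lie}$, which, substituted into (iii), collapses the right-hand side to the single term appearing in (iv). Specializing (iv) to $\m=\q$ (equivalently $\s=\f$), so that $\mathcal{M}_\Lie(\q/\m)=0$, $\m\cap\q_\Lie^n=\q_\Lie^n$ and $[\s,_{n-1}\f]_\Lie=\f_\Lie^n$, yields (v). Item (vi) is the degenerate case $\mathcal{M}_\Lie(\q)=0$ of the four-term sequence, which then reduces to $0\to\mathcal{M}_\Lie(\q/\m)\to\tfrac{\m\cap\q_\Lie^n}{[\m,_{n-1}\q]_\Lie}\to 0$.

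For (vii), the hypothesis that $\m$ is \Lie-central lets me replace Proposition \ref{prop15}(i) by the sharper four-term sequence from Corollary \ref{coro16},
\[\m\otimes^{n-1}\tfrac{\q}{_n\Leib(\q)}\to \mathcal{M}_\Lie(\q)\to \mathcal{M}_\Lie(\q/\m)\to \m\cap\q_\Lie^n\to 0,\]
in which the right-hand tail no longer has a quotient by $[\m,_{n-1}\q]_\Lie$. The usual alternating-sum estimate $\dim B\le\dim A+\dim C-\dim D$ for an exact sequence $A\to B\to C\to D\to 0$ then gives the first inequality of (vii) after rearrangement. For the second inequality I would use the inclusion $\q_\Lie^n\subseteq{}_n\Leib(\q)$ noted in Remark \ref{rem17} (trivially an equality when $n=2$), so that $\q/{}_n\Leib(\q)$ is a quotient of $\q/\q_\Lie^n$ and hence $\dim\bigl(\m\otimes^{n-1}\q/{}_n\Leib(\q)\bigr)\le\dim\bigl(\m\otimes^{n-1}\q/\q_\Lie^n\bigr)$. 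The only genuinely delicate point in the whole proof is (i); once finite dimensionality is in hand, everything else is bookkeeping on the two exact sequences and the commutative diagram.
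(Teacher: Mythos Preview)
Your proposal is correct and matches the paper's approach, which simply asserts that the corollary follows from Proposition~\ref{prop15} together with the displayed $3\times 3$ diagram; you have supplied the dimension-counting details the paper omits, and your use of Corollary~\ref{coro16} for~(vii) is exactly in that spirit. One small imprecision worth tightening: Corollary~\ref{coro3} does not literally assert that the middle term~$\g$ of the \Lie-cover is finite dimensional, so your deduction of~(i) from it reads a bit backwards---but since the proof of Corollary~\ref{coro3} in~\cite{s-b2} must itself establish $\dim\mathcal{M}_\Lie(\q)<\infty$ in order to invoke Theorem~\ref{th2}, citing it here is legitimate shorthand rather than a gap.
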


Let $\q$ be a Leibniz $n$-algebra and $\m$ be an ideal of $\q$. Then $(\m,\q)$ is called a pair of  Leibniz $n$-algebras (see
 \cite{b-s,r-c}). Suppose that $\q\cong\f/\R$ is a free presentation of $\q$ such that $\m\cong\s/\R$, for an ideal
 $\s$ of $\f$. Then one may define the Schur \Lie-multiplier of a pair
 of Leibniz $n$-algebras as
 \[\mathcal{M}_\Lie(\m,\q)=\dfrac{\R\cap [\s,_{n-1}\f]_\Lie}{[\R,_{n-1}\f]_\Lie},\]
 which is actually the first non-zero term in the exact sequence given in Proposition \ref{prop15} (i).
 Clearly if $\m=\q$, then this definition coincides with  the  Schur \Lie-multiplier of a Leibniz $n$-algebra
 discussed in this paper, and
 in addition if $n=2$, then it yields the  Schur \Lie-multiplier of a Leibniz algebra given in \cite{c-i}.

The next result follows from Corollary \ref{coro17} (iii) and \cite[Lemma 2.6 (iii)]{s-b}.

\begin{corollary}\label{coro18}
Let $\q$ be a finite dimensional Leibniz $n$-algebra and $\m$ be an ideal of $\q$ such that $\q$ is \Lie-isoclinic to $\q/\m$.
Then
\[\dim\mathcal{M}_\Lie(\q)  = \dim\mathcal{M}_\Lie(\dfrac{\q}{\m}) + \dim\mathcal{M}_\Lie(\m,\q).\]
\end{corollary}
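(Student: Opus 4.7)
The plan is to derive Corollary \ref{coro18} as a direct consequence of two results already assembled in the paper: the dimension formula in Corollary \ref{coro17}(iii) and the cited structural property of \Lie-isoclinism in \cite[Lemma 2.6(iii)]{s-b}. The strategy is to turn the dimension identity of Corollary \ref{coro17}(iii) into the desired one by cancelling a pair of terms whose equality is precisely what \Lie-isoclinism between $\q$ and $\q/\m$ provides.

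First I would write out Corollary \ref{coro17}(iii) for the ideal $\m$ of $\q$ with the free presentation $\q\cong\f/\R$ and $\m\cong\s/\R$:
\[
\dim\mathcal{M}_\Lie(\q) + \dim(\m\cap\q_\Lie^n) = \dim\mathcal{M}_\Lie\bigl(\tfrac{\q}{\m}\bigr) + \dim[\m,_{n-1}\q]_\Lie + \dim\dfrac{\R\cap [\s,_{n-1}\f]_\Lie}{[\R,_{n-1}\f]_\Lie}.
\]
By the very definition of the Schur \Lie-multiplier of a pair of Leibniz $n$-algebras recalled immediately above Corollary \ref{coro18}, the last summand on the right is exactly $\dim\mathcal{M}_\Lie(\m,\q)$. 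This rewrites the identity as
\[
\dim\mathcal{M}_\Lie(\q) + \dim(\m\cap\q_\Lie^n) = \dim\mathcal{M}_\Lie\bigl(\tfrac{\q}{\m}\bigr) + \dim[\m,_{n-1}\q]_\Lie + \dim\mathcal{M}_\Lie(\m,\q).
\]

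The second step is to use the hypothesis that $\q$ is \Lie-isoclinic to $\q/\m$. The inclusion $[\m,_{n-1}\q]_\Lie\subseteq \m\cap\q_\Lie^n$ is automatic, so to conclude it is enough to obtain the reverse inclusion, or simply the equality of dimensions. This is precisely the content of \cite[Lemma 2.6(iii)]{s-b}, which (translated to the present \Lie-setting) asserts that \Lie-isoclinism of $\q$ with $\q/\m$ forces $\m\cap\q_\Lie^n = [\m,_{n-1}\q]_\Lie$. Invoking this equality allows the two corresponding terms in the displayed identity to cancel, yielding
\[
\dim\mathcal{M}_\Lie(\q) = \dim\mathcal{M}_\Lie\bigl(\tfrac{\q}{\m}\bigr) + \dim\mathcal{M}_\Lie(\m,\q),
\]
which is the desired equality.

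The only delicate point is verifying that \cite[Lemma 2.6(iii)]{s-b} genuinely gives the equality $\m\cap\q_\Lie^n = [\m,_{n-1}\q]_\Lie$ for an ideal $\m$ inducing an isoclinism of $\q$ with $\q/\m$; this is the standard Hall-type characterization of isoclinism in the \Lie-setting and is the conceptual heart of the argument, but since we are allowed to invoke it, the proof reduces to the short bookkeeping above. No further computation is required.
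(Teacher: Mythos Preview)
Your proposal is correct and follows exactly the route the paper indicates: the paper's own proof is simply the one-line remark that the result ``follows from Corollary \ref{coro17}(iii) and \cite[Lemma 2.6(iii)]{s-b},'' and you have spelled out precisely those two ingredients---rewriting the last term of Corollary \ref{coro17}(iii) as $\dim\mathcal{M}_\Lie(\m,\q)$ and then invoking \cite[Lemma 2.6(iii)]{s-b} to cancel $\dim(\m\cap\q_\Lie^n)$ against $\dim[\m,_{n-1}\q]_\Lie$.
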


%%%%%%%%%%%%%%%%%%%%%%%%%%%%%%%%%%%%%%%%%%%%%%%%%%%%%%%%%%%%%%%%%%%%%%%%%%%%%%%%%%%%%%%%%%%%%%%%%%%%%%%%%%%%%%%%%%
%%%%%%%%%%%%%%%%%%%%%%%%%%%%%%%%%%%%%%%%%%%%%%%%%%%%%%%%%%%%%%%%%%%%%%%%%%%%%%%%%%%%%%%%%%%%%%%%%%%%%%%%%%%%%%%%%%

In what follows, we obtain  upper bounds for the dimension of the \Lie-commutator of a Leibniz $n$-algebra with finite dimensional
\Lie-central factor as well as  for the dimension of the Schur \Lie-multiplier of a finite dimensional Leibniz $n$-algebra.

In 1904, Schur  \cite{s} proved  that if   the central factor of a group $G$ is finite, then so is
  $G'$, where $G'$ is the commutator subgroup of $G$.
Also,  Wiegold \cite{w}
 showed that if  $|G/Z(G)|=p^k$, then $G'$ is a $p$-group of order at most $p^{\frac{1}{2}k(k-1)}$.

Nearly a century after Schur, Moneyhun \cite{m}  proved  that if $L$ is a Lie algebra with $\dim L/Z(L)=k$, then $\dim [L,L]\leq \frac{1}{2}k(k-1)$.
 In fact, if $\{\bar{x}_1,\ldots,\bar{x}_k\}$ is a basis for $L/Z(L)$, then $[L,L]$
 can be generated by
 $\{[x_{i},x_{j}]:\ 1\leq i< j\leq k\}$.
  Therefore,  $\dim [L,L]\leq {k\choose2}$.
  Furthermore, if $L$ is an $n$-Lie algebra  such that $\dim L/Z(L)=k$, then  one can similarly show that
  the dimension of the commutator  of $L$ is at most  ${k\choose n}$ (see \cite{d-s}).

The  structure of a group (resp. Lie algebra) and its central factor, with respect to the  order (resp. dimension) of  its commutator has been already studied by many authors (see \cite{a-p-s,d-s,i,p-z}, for instance). Now, we obtain an upper bound for the dimension of the \Lie-commutator of a Leibniz $n$-algebra with finite
dimensional \Lie-central factor.

\begin{theorem}\label{th19}
Let $\q$ be a Leibniz $n$-algebra such that $\dim(\q/Z_\Lie(\q))=k$. Then
\[\dim\q_\Lie^n\leq \sum_{i=1}^{n} {{n-1}\choose{i-1}}{k\choose i}.\]
\end{theorem}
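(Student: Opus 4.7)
The plan is to exhibit a small spanning set of $\q_\Lie^n$ arising from representatives of a basis of $\q/Z_\Lie(\q)$, and then to exploit the total symmetry of the $\Lie$-bracket to reduce the indexing to $n$-multisets, which will be counted by their number of distinct entries.

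First I would record the structural feature that drives the bound. By the very defining formula
\[ [x_1,\ldots,x_n]_\Lie=\sum_{\sigma\in S_n}[x_{\sigma(1)},\ldots,x_{\sigma(n)}], \]
the $\Lie$-bracket is totally symmetric in its $n$ arguments. Combined with the defining property $Z_\Lie(\q)=\{x\in\q:[x,_{n-1}\q]_\Lie=0\}$, this yields the key vanishing: whenever any $y_j$ lies in $Z_\Lie(\q)$, a transposition moves it into the first slot and so $[y_1,\ldots,y_n]_\Lie=0$.

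Next I would fix representatives $x_1,\ldots,x_k\in\q$ of a basis of $\q/Z_\Lie(\q)$, so that every $y\in\q$ decomposes as $y=\sum_{j=1}^{k}\alpha_j\,x_j+z$ with $z\in Z_\Lie(\q)$ and $\alpha_j\in\mathbb{K}$. Expanding $[y_1,\ldots,y_n]_\Lie$ by $n$-linearity and discarding every term that has at least one $Z_\Lie$-argument shows that $\q_\Lie^n$ is generated by the family $\bigl\{[x_{i_1},\ldots,x_{i_n}]_\Lie : 1\leq i_1,\ldots,i_n\leq k\bigr\}$. By the symmetry recorded above, each such generator depends only on the unordered multiset of indices $i_1,\ldots,i_n$.

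The final step is to count these multisets of size $n$ drawn from $\{1,\ldots,k\}$ by their number $i$ of distinct entries. For fixed $i\in\{1,\ldots,n\}$, the $i$ distinct values can be chosen in $\binom{k}{i}$ ways; once they are ordered canonically, one then distributes positive multiplicities $m_1,\ldots,m_i\geq 1$ with $\sum_{j=1}^{i}m_j=n$, which by stars-and-bars accounts for $\binom{n-1}{i-1}$ compositions. Summing over $i$ produces the desired bound $\sum_{i=1}^{n}\binom{n-1}{i-1}\binom{k}{i}$. The only points that require any scrutiny are the total symmetry of $[-,\ldots,-]_\Lie$ and the vanishing on $Z_\Lie$-arguments; both are immediate from the definitions, so the substance of the theorem lies entirely in the combinatorial count.
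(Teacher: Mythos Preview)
Your proof is correct and follows essentially the same approach as the paper: choose representatives of a basis of $\q/Z_\Lie(\q)$, observe that $\q_\Lie^n$ is spanned by the symmetric brackets $[x_{j_1},\ldots,x_{j_n}]_\Lie$ with $1\le j_1\le\cdots\le j_n\le k$, and count these multisets by the number $i$ of distinct entries to obtain $\binom{n-1}{i-1}\binom{k}{i}$. If anything, you are more explicit than the paper about the two points it leaves implicit---the total symmetry of $[-,\ldots,-]_\Lie$ and the vanishing on $Z_\Lie$-arguments---and your stars-and-bars justification of $\binom{n-1}{i-1}$ spells out what the paper records as ``a similar combinatorial computation.''
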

\begin{proof}
Let $\{\bar{x}_1,\ldots,\bar{x}_k\}$ be a basis for $\q/Z_\Lie(\q)$. We should actually find the cardinal number of the set
\[B=\{[x_{j_1},\ldots,x_{j_n}]_\Lie:\ 1\leq j_1\leq\cdots\leq j_n\leq k\}.\]
In fact,
 $$\dim\q_\Lie^n\leq \Gamma_1+\Gamma_2+\cdots+\Gamma_{n},$$
 where $\Gamma_i$ is the number of elements $[x_{j_1},\ldots,x_{j_n}]_\Lie$ in $B$ such that the set $\{x_{j_1},\ldots,x_{j_n}\}$ contains
 exactly $i$ distinct members
 ($1\leq i\leq n$).
Therefore, $\Gamma_1=k={k\choose 1}$, since we have these $k$ elements: $$[x_1,\ldots,x_1]_\Lie,\ldots, [x_k,\ldots,x_k]_\Lie.$$
 Also  $\Gamma_2={n-1\choose 1}{k\choose 2}$, because of elements of the form:
 \[[x_{j_s},x_{j_t},\ldots,x_{j_t}]_\Lie, [x_{j_s},x_{j_s},x_{j_t}\ldots,x_{j_t}]_\Lie,\ldots,[x_{j_s},\ldots,x_{j_s},x_{j_t}]_\Lie,\]
 in which $1\leq j_s < j_t\leq k$.
 By a similar combinatorial computation, one can deduce that
 $$\Gamma_i={{n-1}\choose{i-1}}{k\choose i},$$
 which completes the proof.\\
\end{proof}

\begin{corollary}\label{coro20}
Let $\q$ be a Leibniz $2$-algebra such that $\dim(\q/Z_\Lie(\q))=k$. Then
\[\dim[\q,\q]_\Lie\leq \frac{1}{2}k(k+1).\]
\end{corollary}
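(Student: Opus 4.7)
The plan is to simply specialize Theorem \ref{th19} to the case $n=2$. Since $\q$ is a Leibniz $2$-algebra, we have $\q_\Lie^n = \q_\Lie^2 = [\q,\q]_\Lie$ by definition, so the left-hand side of the inequality in Theorem \ref{th19} becomes $\dim[\q,\q]_\Lie$.

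Next I would compute the right-hand side. Setting $n=2$, the sum
\[
\sum_{i=1}^{n} \binom{n-1}{i-1}\binom{k}{i}
\]
has exactly two terms, corresponding to $i=1$ and $i=2$. The $i=1$ term is $\binom{1}{0}\binom{k}{1}=k$, and the $i=2$ term is $\binom{1}{1}\binom{k}{2}=\frac{k(k-1)}{2}$. Adding these yields
\[
k+\frac{k(k-1)}{2}=\frac{2k+k(k-1)}{2}=\frac{k(k+1)}{2},
\]
which is precisely the asserted upper bound $\frac{1}{2}k(k+1)$.

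There is no real obstacle here; the only thing to check is that the indexing convention of Theorem \ref{th19} matches the Leibniz $2$-algebra situation, in particular that $\q_\Lie^2$ agrees with the notation $[\q,\q]_\Lie$ used in the statement. This is immediate from the definition $\q_\Lie^n=\langle [x_1,\ldots,x_n]_\Lie \mid x_i\in \q\rangle$ given in Section \ref{Preliminaries}. Thus the corollary follows at once by substitution.
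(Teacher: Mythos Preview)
Your proposal is correct and matches the paper's approach exactly: the paper states Corollary \ref{coro20} immediately after Theorem \ref{th19} with no separate proof, so it is intended as the direct specialization $n=2$ that you carried out.
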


\begin{remark}\label{rem21} \normalfont
Let $\q$ be a Leibniz $n$-algebra,
\[Z(\q)=\{x\in\q|\ [x,\q,\ldots,\q]=\cdots=[\q,\ldots,\q,x]=0\}\]
be the center, and $\q^n=[\q,\ldots,\q]$ be the commutator of $\q$. It is easy to see that if $\dim(\q/Z(\q))=k$, then
$\dim\q^n\leq k^n$.
\end{remark}

In \cite{m}, Moneyhun proved  that if $L$ is a Lie algebra of dimension $k$, then $\dim\mathcal{M}(L)\leq \frac{1}{2}k(k-1)$.
Also, for   a $k$-dimensional $n$-Lie algebra $L$, we have $\dim\mathcal{M}(L)\leq{k\choose n}$ (see \cite{d-s}).
In the next result, we give an upper bound for the dimension of the Schur \Lie-multiplier of a finite dimensional Leibniz $n$-algebra.

\begin{theorem}\label{coro22}
Let $\q$ be a $k$-dimensional Leibniz $n$-algebra. Then
\[\dim\mathcal{M}_\Lie(\q)\leq\sum_{i=1}^{n} {{n-1}\choose{i-1}}{k\choose i}.\]
In particular, if the equality occurs, then $\q$ is a \Lie-abelian Leibniz $n$-algebra.
\end{theorem}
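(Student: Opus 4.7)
The plan is to reduce the statement to Theorem~\ref{th19} by passing through a \Lie-cover. By Corollary~\ref{coro3}, the finite dimensional Leibniz $n$-algebra $\q$ admits at least one \Lie-cover, so I would pick a \Lie-stem cover
\[0\longrightarrow\m\longrightarrow\g\longrightarrow\q\longrightarrow 0,\]
with $\m\cong\mathcal{M}_\Lie(\q)$ and $\m\subseteq Z_\Lie(\g)\cap\g_\Lie^n$. The cover is what converts the Schur \Lie-multiplier into a piece of an honest \Lie-commutator, where Theorem~\ref{th19} can act.

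Next I would estimate $\dim(\g/Z_\Lie(\g))$. Since $\m\subseteq Z_\Lie(\g)$, the surjection $\g\twoheadrightarrow\g/Z_\Lie(\g)$ factors through $\g/\m\cong\q$, so $\g/Z_\Lie(\g)$ is a quotient of $\q$ and in particular has dimension at most $k$. Combining this with the monotonicity of the binomial coefficients ${t\choose i}$ in $t$ and applying Theorem~\ref{th19} to $\g$ yields
\[\dim\g_\Lie^n\;\leq\;\sum_{i=1}^{n}{{n-1}\choose{i-1}}{\dim(\g/Z_\Lie(\g))\choose i}\;\leq\;\sum_{i=1}^{n}{{n-1}\choose{i-1}}{k\choose i}.\]
Because $\m\subseteq\g_\Lie^n$, the desired inequality follows from $\dim\mathcal{M}_\Lie(\q)=\dim\m\leq\dim\g_\Lie^n$.

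For the equality clause, suppose $\dim\mathcal{M}_\Lie(\q)=\sum_{i=1}^{n}{{n-1}\choose{i-1}}{k\choose i}$. Then both inequalities in the chain above must be equalities; in particular $\dim\m=\dim\g_\Lie^n$, and since $\m\subseteq\g_\Lie^n$, this forces $\m=\g_\Lie^n$. Using $\q\cong\g/\m$, I would then compute
\[\q_\Lie^n\;\cong\;\Bigl(\dfrac{\g}{\m}\Bigr)_\Lie^n\;=\;\dfrac{\g_\Lie^n+\m}{\m}\;=\;0,\]
which means $\q$ is \Lie-abelian, as required.

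I do not expect a serious obstacle here: the argument is entirely structural, and the only point that needs a little care is the factorization $\g/Z_\Lie(\g)$ through $\q$, which is automatic because $\m\subseteq Z_\Lie(\g)$. The real work was already done in Theorem~\ref{th19} and in the existence of \Lie-covers; the present theorem just assembles these ingredients.
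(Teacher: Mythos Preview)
Your proof is correct and follows essentially the same route as the paper's: take a \Lie-stem cover, bound $\dim(\g/Z_\Lie(\g))\leq k$ via $\m\subseteq Z_\Lie(\g)$, apply Theorem~\ref{th19}, and read off $\dim\mathcal{M}_\Lie(\q)=\dim\m\leq\dim\g_\Lie^n$; the equality clause is handled identically by forcing $\m=\g_\Lie^n$. Your exposition is slightly more explicit (e.g.\ invoking Corollary~\ref{coro3} and the monotonicity of the binomial sum), but the argument is the same.
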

\begin{proof}
Let $0\rightarrow\m\rightarrow\g\rightarrow\q\rightarrow 0$ be a \Lie-stem cover of $\q$. Clearly,
$\dim(\g/Z_\Lie(\g))\leq\dim(\g/\m) =k$, thus  $\dim\g_\Lie^n\leq\sum_{i=1}^{n} {{n-1}\choose{i-1}}{k\choose i}$, thanks to Theorem \ref{th19}.
Hence
\[\dim\mathcal{M}_\Lie(\q)=\dim\m\leq\dim\g_\Lie^n\leq\sum_{i=1}^{n} {{n-1}\choose{i-1}}{k\choose i}.\]
Now, if  the equality holds, then the above inequality implies that
$\m=\g_\Lie^n$ and hence $\q=\g/\m$ is  \Lie-abelian.
\end{proof}

In \cite{d-s}, it is shown that a $k$-dimensional $n$-Lie algebra $L$ is abelian if and only if $\dim\mathcal{M}(L)={k\choose n}$.
But the following example shows that the converse of the last statement of Theorem \ref{coro22} is not true when $n\geq 3$.

\begin{example}\label{ex23} \normalfont
Let $\q=span\{x,y\}$ be the $2$-dimensional complex Leibniz $3$-algebra with non-zero multiplications
$[x,x,y]=-[y,x,x]=y$. Clearly, $\q_\Lie^3=0$ and so $\q$ is \Lie-abelian. Moreover,
$$\dim\dfrac{\q}{_3{\Leib(\q)}}=\dim\dfrac{\q}{span\{y\}}=1.$$
Now, in Corollary \ref{coro17} (vii), if we put $\m=\q$, then
$$\dim\mathcal{M}_\Lie(\q)\leq\dim(\q\otimes^2\dfrac{\q}{_3{\Leib(\q)}})\leq\dim\q=2,$$
while $\sum_{i=1}^{3} {{2}\choose{i-1}}{2\choose i}=4$. Also, Example \ref{ex1} is another counter-example.
\end{example}

\begin{corollary}\label{coro24}
Let $\q$ be a  $k$-dimensional Leibniz $2$-algebra. Then
\[\dim\mathcal{M}_\Lie(\q)\leq \frac{1}{2}k(k+1),\]
and  the equality holds if and only if $\q$ is \Lie-abelian.
\end{corollary}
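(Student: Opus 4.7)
The plan is to invoke Theorem \ref{coro22} for the inequality and the forward direction of the equivalence, and then construct an explicit \Lie-stem extension realizing the upper bound for the converse.

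Specializing Theorem \ref{coro22} to $n=2$, the sum $\sum_{i=1}^{2}\binom{1}{i-1}\binom{k}{i}=\binom{k}{1}+\binom{k}{2}$ evaluates to $\frac{1}{2}k(k+1)$, which yields the stated upper bound. The implication that equality forces $\q$ to be \Lie-abelian is precisely the ``In particular'' clause of that same theorem.

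For the converse, assume $\q$ is a \Lie-abelian Leibniz $2$-algebra of dimension $k$. Since $\mathrm{char}\,\mathbb{K}=0$ (so $\frac{1}{2}\in\mathbb{K}$), Remark \ref{rem17} gives that $\q$ is actually a Lie algebra. The strategy is to exhibit a \Lie-stem extension $0\to\m\to\g\to\q\to 0$ with $\dim\m=\binom{k+1}{2}$; Proposition \ref{prop5} then guarantees that $\g$ is a homomorphic image of a \Lie-cover of $\q$, so $\dim\mathcal{M}_\Lie(\q)\geq\binom{k+1}{2}$, and the upper bound forces equality.

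To construct the extension, fix a basis $\{e_1,\ldots,e_k\}$ of $\q$ and let $W$ be a vector space with basis $\{w_{ij}:1\leq i\leq j\leq k\}$, so $\dim W=\binom{k+1}{2}$. On $\g=\q\oplus W$, declare $W$ to be strictly central and set $[e_i,e_j]_\g=[e_i,e_j]_\q + c(e_i,e_j)$, where $c:\q\times\q\to W$ is the symmetric bilinear map determined by $c(e_i,e_j)=w_{\min(i,j)\,\max(i,j)}$. The principal obstacle is to verify that $\g$ satisfies the Leibniz identity; this reduces to the symmetric $2$-cocycle condition
\[
c(x,[y,z]_\q)=c([x,y]_\q,z)-c([x,z]_\q,y)\qquad\text{for all } x,y,z\in\q.
\]
For $\q$ abelian this is automatic, but for a general Lie algebra $\q$ the right-hand side imposes genuine constraints that must be analyzed using the Lie identity in $\q$; handling these (possibly by modifying $c$ along the structure constants of $\q$ or by passing to a suitable quotient of $S^{2}(\q)$) is the main technical step. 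Once this is established, $W\subseteq Z_\Lie(\g)$ by construction and $W=\g_\Lie^{2}$ since the symmetric part of $c$ spans $W$, so $\m=W$ realises a \Lie-stem extension of the required dimension and completes the argument.
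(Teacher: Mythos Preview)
Your overall strategy matches the paper's: invoke Theorem~\ref{coro22} for the bound and the forward implication, then for the converse exhibit a \Lie-stem extension with kernel of dimension $\tfrac{1}{2}k(k+1)$ and appeal to Proposition~\ref{prop5}. The gap is in your construction of that extension.

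You try to build $\g=\q\oplus W$ by perturbing the bracket of $\q$ with a \emph{symmetric} bilinear map $c$, and you correctly isolate the cocycle condition $c(x,[y,z]_\q)=c([x,y]_\q,z)-c([x,z]_\q,y)$ as ``the main technical step''. But you do not carry it out, and in fact it cannot be carried out as stated: a \Lie-abelian Leibniz $2$-algebra is just a Lie algebra, not necessarily abelian, and for non-abelian $\q$ the condition forces $c$ to be degenerate. For example, if $\dim\q=2$ with $[e_1,e_2]_\q=e_1$, then taking $x=y=e_1$, $z=e_2$ gives $c(e_1,e_1)=c(0,e_2)-c(e_1,e_1)=-c(e_1,e_1)$, so $c(e_1,e_1)=0$ and $W$ cannot have dimension $\binom{k+1}{2}$. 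Your proposed fixes (modifying $c$, or passing to a quotient of $S^2(\q)$) only lower $\dim W$ further, which defeats the purpose.

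The paper's route avoids the cocycle problem entirely. Rather than deforming $\q$, it builds a fixed Leibniz algebra $\g=\mathfrak{u}\oplus\mathfrak{v}$ with bases $\{x_1,\dots,x_k\}$ and $\{y_{ij}:1\le i\le j\le k\}$, \emph{upper-triangular} (not symmetric) multiplication $[x_i,x_j]=y_{ij}$ for $i\le j$, $[x_j,x_i]=0$ for $i<j$, and $\mathfrak{v}$ strictly central. Since every product lands in the centre, all triple brackets vanish and the Leibniz identity is trivially satisfied---there is no cocycle condition to verify. One checks $\g_\Lie^2=\mathfrak{v}$ (using $y_{ij}=[x_i,x_j]_\Lie$ for $i<j$ and $y_{ii}=\tfrac12[x_i,x_i]_\Lie$), so $0\to\mathfrak{v}\to\g\to\g/\mathfrak{v}\to 0$ is a \Lie-stem extension with $\dim\mathfrak{v}=\tfrac12 k(k+1)$, and Proposition~\ref{prop5} together with the upper bound forces $\g$ itself to be a \Lie-cover. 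Note that $\g/\mathfrak{v}$ is the $k$-dimensional \emph{abelian} Leibniz algebra, so the paper's construction, as written, produces an extension of that particular $\q$; the identification with a general \Lie-abelian $\q$ is made tacitly.
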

\begin{proof}
If the equality holds then by Theorem \ref{coro22}, $\q$ is \Lie-abelian. Conversely,
let $\q$ be a \Lie-abelian Leibniz $2$-algebra. Then for every \Lie-stem extension $0\rightarrow\m\rightarrow\g\rightarrow\q\rightarrow 0$
 of $\q$, we have $\m=\g_\Lie^2\subseteq Z_\Lie(\g)$. Similar to the Lie case given in \cite[Lemma 23]{m}, assume that $\mathfrak{u}$ and $\mathfrak{v}$ are vector spaces
with bases $A=\{x_1,\ldots,x_k\}$ and  $B=\{y_{ij}|\ 1\leq i\leq j\leq k\}$, respectively, and put
$\g=\mathfrak{u}+\mathfrak{v}$ and consider the multiplications: $[x_i,x_j]=y_{ij}$ for $1\leq i\leq j\leq k$,
$[x_j,x_i]=0$  for $1\leq i< j\leq k$ and $[y_{ij},g]=[g,y_{ij}]=0$ for every $g\in \g$.
Therefore, $\g$ is a Leibniz algebra (by linear extending the above products  to all of $\g$). Moreover
if $i<j$, then $y_{ij}=[x_i,x_j]=[x_i,x_j]_\Lie\in \g_\Lie^2$ and also $y_{ii}=[x_i,x_i]=\frac{1}{2}[x_i,x_i]_\Lie\in\g_\Lie^2$. Hence
  $\g_\Lie^2=\mathfrak{v}$.
Furthermore, $\dim(\g/\g_\Lie^2)=\dim\mathfrak{u}=k$ and $0\to\g_\Lie^2\to\g\to\q\to 0$ is a \Lie-stem extension of $\q$ in which $\g$ is of maximal dimension.
Now, Proposition \ref{prop5} implies that $\g$ is a \Lie-cover of $\q$ and so $\dim\mathcal{M}_\Lie(\q)=\dim \g_\Lie^2=|B|=\frac{1}{2}k(k+1)$.
\end{proof}

\begin{corollary}\label{coro25}
If $\q$ is a  $k$-dimensional Leibniz $n$-algebra, then
\[\dim\mathcal{M}_\Lie(\q)\leq \sum_{i=1}^{n} {{n-1}\choose{i-1}}{k\choose i} -\dim\q_\Lie^n.\]
\end{corollary}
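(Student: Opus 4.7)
The plan is to sharpen Theorem~\ref{coro22} by tracking the contribution of $\q_\Lie^n$ inside $\g_\Lie^n$, where $\g$ is a \Lie-stem cover of $\q$. The point is that in the proof of Theorem~\ref{coro22} the estimate $\dim\mathcal{M}_\Lie(\q)\leq\dim\g_\Lie^n$ discards the rest of $\g_\Lie^n$, which modulo $\m$ is naturally isomorphic to $\q_\Lie^n$; reclaiming this discarded piece produces exactly the correction term $-\dim\q_\Lie^n$.

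Concretely, I would begin by invoking Corollary~\ref{coro3} to pick a \Lie-stem cover $0\to\m\to\g\to\q\to 0$, so that $\m\cong\mathcal{M}_\Lie(\q)$ and $\m\subseteq Z_\Lie(\g)\cap\g_\Lie^n$. Exactly as in Theorem~\ref{coro22}, the inclusion $\m\subseteq Z_\Lie(\g)$ forces $\g/Z_\Lie(\g)$ to be a quotient of $\g/\m\cong\q$, hence $\dim(\g/Z_\Lie(\g))\leq k$, and Theorem~\ref{th19} yields
\[\dim\g_\Lie^n\leq\sum_{i=1}^{n}{{n-1}\choose{i-1}}{k\choose i}.\]
Next, the quotient map $\g\twoheadrightarrow\g/\m\cong\q$ sends $\g_\Lie^n$ onto $(\g/\m)_\Lie^n=\q_\Lie^n$, and since $\m\subseteq\g_\Lie^n$ its kernel on $\g_\Lie^n$ is precisely $\m$. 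This gives a short exact sequence of vector spaces
\[0\longrightarrow\m\longrightarrow\g_\Lie^n\longrightarrow\q_\Lie^n\longrightarrow 0,\]
from which $\dim\g_\Lie^n=\dim\mathcal{M}_\Lie(\q)+\dim\q_\Lie^n$. Substituting this into the previous display yields the desired inequality.

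There is no real obstacle here: the argument is essentially a one-line refinement of Theorem~\ref{coro22}. The only point worth checking is that the quotient $\g_\Lie^n/\m$ genuinely coincides with $(\g/\m)_\Lie^n=\q_\Lie^n$, which is immediate from the definition $\q_\Lie^n=\langle [x_1,\ldots,x_n]_\Lie\mid x_i\in\q\rangle$ together with the fact that $\m\subseteq\g_\Lie^n$. No new structural ingredient beyond Theorems~\ref{th19} and~\ref{coro22} (and the existence of a \Lie-stem cover from Corollary~\ref{coro3}) is required.
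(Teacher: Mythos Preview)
Your proof is correct and follows essentially the same strategy as the paper's. The only cosmetic difference is that the paper applies Theorem~\ref{th19} to $\f/[\R,_{n-1}\f]_\Lie$ (for a free presentation $\q\cong\f/\R$) and then invokes Corollary~\ref{coro17}(v), namely $\dim\mathcal{M}_\Lie(\q)+\dim\q_\Lie^n=\dim(\f_\Lie^n/[\R,_{n-1}\f]_\Lie)$, whereas you work with a \Lie-stem cover and obtain the analogous identity directly from the short exact sequence $0\to\m\to\g_\Lie^n\to\q_\Lie^n\to 0$.
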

\begin{proof}
Let $0\to\R\to\f\to\q\to0$ be a free presentation of $\q$. Since
\[\dim\big{(}\frac{\f/[\R,_{n-1}\f]_\Lie}{Z_\Lie(\f/[\R,_{n-1}\f]_\Lie)}\big{)}\leq\dim(\f/\R)=k,\]
by Theorem \ref{th19} we get $\dim(\f_\Lie^n/[\R,_{n-1}\f]_\Lie)\leq \sum_{i=1}^{n} {{n-1}\choose{i-1}}{k\choose i}$.
Now the result follows from Corollary \ref{coro17} (v).
\end{proof}

\begin{remark}\label{prob8} \normalfont
Let $L$ be a  Lie algebra of dimension $k$. Then $t(L)=\frac{1}{2}k(k-1)-\dim\mathcal{M}(L)$ is a non-negative integer.
One of the most interesting problem in the study of Lie algebras  is  classifying  Lie algebras by $t(L)$ (see \cite{n}).
 For example in \cite{b-m-s}, Batten et al. characterized nilpotent Lie algebras with $t(L)=0,1,2$, and
later Hardy and Stitzinger \cite{h-s} when  $t(L)=3,4,5,6$ and then  for $t(L)=7,8$ in \cite{h}.
 A similar problem arises in the case of Lie superalgebras  \cite{n1,safa}, and also in finite group theory  \cite{brk, lls, gsc, zh}.

Here, the  same question appears for Leibniz $n$-algebras. The characterizing of $k$-dimensional Leibniz $n$-algebras by $t(n,\q)$, where
\[t(n,\q)=\sum_{i=1}^{n} {{n-1}\choose{i-1}}{k\choose i}-\dim\mathcal{M}_\Lie(\q).\]
\end{remark}

%%%%%%%%%%%%%%%%%%%%%%%%%%%%%%%%%%%%%%%%%%%%%%%%%%%%%%%%%%%%%%%%%%%%%%%%%%%%%%%%%%%%%%%%%%%%%%%%%%%%%%%%%%%%%%%%%%%%%%%%%%%%%%%%%%%%%%%%%%%%%
%%%%%%%%%%%%%%%%%%%%%%%%%%%%%%%%%%%%%%%%%%%%%%%%%%%%%%%%%%%%%%%%%%%%%%%%%%%%%%%%%%%%%%%%%%%%%%%%%%%%%%%%%%%%%%%%%%%%%%%%%%%%%%%%%%%%%%%%%%%%%

Hesam Safa \\
Department of Mathematics, Faculty of Basic Sciences, University of Bojnord, Bojnord, Iran.\\
E-mail address:   h.safa@ub.ac.ir \\
%ORCID: 0000-0002-5418-8104  \\ \ \\

Guy R. Biyogmam \\
Department of Mathematics, Georgia College \& State University, Milledgeville, GA, USA.\\
E-mail address:  guy.biyogmam@gcsu.edu \\ \ \\

\end{document}